\documentclass{amsart}
\usepackage{amsfonts}
\usepackage{bbm}
\usepackage{mathrsfs}
\usepackage{color}
\usepackage{amsmath}
\usepackage{bigdelim}
\usepackage{multirow}
\usepackage{amssymb}
\usepackage{indentfirst}
\usepackage{CJK}
\usepackage{fancyhdr}
\usepackage{amscd,amssymb,amsmath,graphicx,verbatim,xy}
\usepackage[TS1,OT1,T1]{fontenc}

\newtheorem{theorem}{Theorem}[section]
\newtheorem{lemma}[theorem]{Lemma}

\newtheorem{proposition}[theorem]{Proposition}

\theoremstyle{definition}
\newtheorem{definition}[theorem]{Definition}

\theoremstyle{remark}
\newtheorem{remark}[theorem]{Remark}
\numberwithin{equation}{section}

\begin{document}
	
	\title[Diagonalization of skew-Hermitian ]{Diagonalization of skew-Hermitian dual
		quaternion matrices and $\phi$-Hermitian dual quaternion matrices}
	\author{Cailing Yao}
	\address{Cailing Yao, School of Medical Informatics, Changchun University of Chinese Medicine, 130117, Changchun, P. R. China}
	\email{yaocl@ccucm.edu.cn}
	\author{Bingzhe Hou}
	\address{Bingzhe Hou, School of Mathematics, Jilin University, 130012, Changchun, P. R. China}
	\email{houbz@jlu.edu.cn}

	\subjclass{15A20, 15A23, 15B33, 15B57.}
	
	\keywords{Dual quaternion matrices, skew-Hermitian, unitary equivalence, $\phi$-Hermitian, Autonne--Takagi factorization.}
	
	\begin{abstract}
In this article, we present the diagonalization of skew-Hermitian dual quaternion matrices in the sense of unitary equivalence. For a unit quaternion $\phi$, we also obtain the diagonalization result of $\phi$-Hermitian dual quaternion matrices in the sense of unitary $\phi$-congruence, which generalizes the Autonne--Takagi factorization of symmetric complex matrices and $\phi$-Hermitian quaternion matrices.
	\end{abstract}
	\maketitle

\section{Introduction}

The skew field $\mathbb{Q}$ of quaternions was discovered by W. Hamilton \cite{H1843} in 1843. Quaternions have extensive applications in many fields, such as physics \cite{WW}, computer graphics \cite{KS}, aerospace \cite{WGM}, virtual reality \cite{KC}, and so on. Dual quaternions were introduced by Clifford \cite{C} in $1873$. By now, dual quaternions have been widely applied in many fields, such as robotics \cite{AC}, 3D motion modelling and control \cite{LL}. The theories related to complex matrices and quaternion matrices have already been well developed, including spectral theory, decomposition theory, numerical analysis, generalized inverses and equations. However, the corresponding spectral and factorization theories for dual quaternion matrices remain incomplete. It is worth mentioning that L. Qi and Z. Luo \cite{QL} explored the spectral theory of dual quaternion matrices. In particular, the diagonalization problem is a natural and important topic.

R. Horn and F. Zhang \cite{HZ} extended the Autonne--Takagi factorization of symmetric complex matrices to $\eta$-Hermitian quaternion matrices, where $\eta$ is a unit pure imaginary quaternion. C. Yao et al. \cite{YHX} further extended this result from unit pure imaginary quaternions to arbitrary unit quaternions and introduced the notions of $\phi$-conjugation and unitary $\phi$-congruence. The notion of $\phi$-conjugation was motivated by the $\boldsymbol{j}$-conjugate introduced in \cite{H}, the $\boldsymbol{i}$-conjugate introduced in \cite{YHF} and conjugate-normality studied in \cite{FI08}. The concept of unitary $\phi$-congruence was inspired by the classical notion of unitary congruence in \cite{HLM1974}. L. Qi and Z. Luo \cite{QL} established a unitary diagonalization result for Hermitian dual quaternion matrices. 

Throughout this paper, unitary equivalence refers to a transformation of the form $A\mapsto UAU^{\ast}$, where $U$ is unitary. Motivated by the aforementioned results, we present the diagonalization conclusion of skew-Hermitian dual quaternion matrices under unitary equivalence. We also prove that for every unit quaternion $\phi$, each $\phi$-Hermitian dual quaternion matrix can be diagonalized through unitary $\phi$-congruence. Relatedly, R. Xu et al. \cite{XW} obtained an Autonne–Takagi factorization for dual symmetric complex matrices and established the diagonalization of \(\eta\)-Hermitian dual quaternion matrices for any unit pure imaginary quaternion \(\eta\). The transformation used therein is \(A\mapsto VAV^{\eta H}\), where \(H\) denotes the conjugate transpose and \(V^{\eta H}=\eta^{H}V^{H}\eta\). The matrix \(V\) satisfies \(VV^{\eta H}=I_n\), rather than the usual unitary condition \(VV^{H}=I_n\). Consequently, this transformation is generally not a unitary equivalence in the classical sense.

Notice that the diagonal entries of the diagonal matrix obtained from skew-Hermitian quaternion matrices under unitary equivalence are all purely imaginary numbers, and purely imaginary quaternions do not in general commute with quaternion matrices, even when the standard diagonal block is scalar. Then, the overall procedure for diagonalizing skew-Hermitian dual quaternion matrices becomes more complicated.

In Section $2$, we mainly introduce some relevant concepts and their basic properties that will be needed in the later discussion including dual numbers, quaternions and quaternion matrices, dual quaternions. In Section $3$, we provide a detailed proof of how to diagonalize a skew-Hermitian dual quaternion matrix in the sense of unitary equivalence. Throughout the diagonalization process, the proof identifies why the diagonalization of a skew-Hermitian dual quaternion matrix in the sense of unitary equivalence is more complicated than that of a Hermitian dual quaternion matrix. In Section $4$, the diagonalization of a $\phi$-Hermitian dual quaternion matrix in the sense of unitary $\phi$-congruence is provided, where $\phi$ can range over all unit quaternions, not only unit pure imaginary quaternions. This kind of diagonalization can be viewed as a generalization of the Autonne--Takagi factorization of symmetric complex matrix and $\phi$-Hermitian quaternion matrix for $\phi\in\mathbb{S}^{3}$.

\section{Preliminaries}
In this section, we will introduce some basic information about dual numbers, quaternions, dual quaternions and so on.

\noindent\textbf{Dual numbers.} \
 \ Denote by $\mathbb{R}$, $\mathbb{C}$ and $\mathbb{Q}$ the sets of all real numbers, complex numbers and quaternions, respectively. Assume $q=q_{st}+q_{\mathcal{I}}\epsilon$, where $q_{st},q_{\mathcal{I}}\in\mathbb{R}$, $\epsilon$ is the infinitesimal unit, satisfying $\epsilon^{2}=0$. We call $q$ a dual number, $q_{st}$ the standard part of $q$ and $q_{\mathcal{I}}$ the dual part or infinitesimal part of $q$. Denote by $\mathbb{D}$ the set of dual numbers. In addition, if $q_{st},q_{\mathcal{I}}\in\mathbb{C}$ or $q_{st},q_{\mathcal{I}}\in\mathbb{Q}$, respectively, then we say $q$ is a dual complex number or dual quaternion. We denote by $\mathbb{DC}$ the set of dual complex numbers and $\mathbb{DQ}$ the set of dual quaternions, respectively. The infinitesimal unit $\epsilon$ is commutative in multiplication with numbers in $\mathbb{R}$, $\mathbb{C}$ and $\mathbb{Q}$. If $q_{st}\neq 0$, we say $q$ is appreciable, or else, we say $q$ is infinitesimal.

For any two dual numbers $p,q\in \mathbb{D}$, where
$$
p=p_{st}+p_{\mathcal{I}}\epsilon, \ q=q_{st}+q_{\mathcal{I}}\epsilon.
$$
 If either of the following two cases occurs,
\begin{enumerate}
\item [(i).] $p_{st}<q_{st}$,

\item [(ii).] $p_{st}=q_{st}$ and $p_{\mathcal{I}}\leq q_{\mathcal{I}}$,
\end{enumerate}
then we say $p\leq q$. Furthermore, we say $p$ is positive, nonnegative, nonpositive or negative, if $p>0$, $p\geq 0$, $p\leq 0$ or $p<0$, respectively. More about the total order for dual numbers can be seen in \cite{QLY}.

\noindent\textbf{Quaternions and quaternion matrices.}\ \ We define $a\in\mathbb{Q}$ to be $a=a_0+a_1\boldsymbol{i}+a_2\boldsymbol{j}+a_3\boldsymbol{k},\ a_0, a_1, a_2, a_3\in\mathbb{R}$ and $\boldsymbol{i}^2=\boldsymbol{j}^2=\boldsymbol{k}^2=\boldsymbol{ijk}=-1$. Denote the real and imaginary parts of $a$  by $\mathrm{Re}(a)=a_0$ and  $\mathrm{Im}(a)=a_1\boldsymbol{i}+a_2\boldsymbol{j}+a_3\boldsymbol{k},$ respectively, the modulus of $a$ by
$|a|=\sqrt{a_0^2+a_1^2+a_2^2+a_3^2}$, and the conjugation of $a$ by $\overline{a}=a_0-a_1\boldsymbol{i}-a_2\boldsymbol{j}-a_3\boldsymbol{k}$. We say $a\in\mathbb{Q}$ is invertible if there exists $b\in\mathbb{Q}$ such that $ab=ba=1$ and call $b$ the inverse of $a$.
Assume $\phi\in\mathbb{Q}$, then $\phi$ can be expressed in the form $\phi_{0}+\phi_{1}\boldsymbol{i}+\phi_{2}\boldsymbol{j}+\phi_{3}\boldsymbol{k}$.
Let
$$
\mathbb{S}^{3}=\{ \phi\in\mathbb{Q}; |\phi|=1\}, \ \mathbb{S}=\{ \phi\in\mathbb{Q}; |\phi|=1, \phi_{0}=0\}.
$$
Then $\mathbb{S}^{3}\setminus \mathbb{S}=\{ \phi\in \mathbb{S}^{3}; \phi_{0}\neq 0\}$.

Denote by $\mathbb{C}^{n\times n}$ and $\mathbb{Q}^{n\times n}$ the set of all $n\times n$ complex matrices and $n\times n$ quaternion matrices, respectively. If $Q=(q_{ij}) \in \mathbb{Q}^{n\times n}$, we use $\overline{Q}=(\overline{q_{ij}}),Q^{T}=(q_{ji}),Q^{*}=\overline{Q^{T}}=(\overline{Q})^{T}$ to denote the conjugation, transpose and conjugate transpose of $Q$ respectively. If $Q=\overline{Q}$, then $Q$ is a real matrix. If $Q=Q^{T}$, we say $Q$ is symmetric and if $Q=Q^{*}(Q=-Q^{*})$, then we say $Q$ is Hermitian (skew-Hermitian).

For two matrices $A,B\in \mathbb{Q}^{n\times n}$, if there exists an invertible matrix $P\in \mathbb{Q}^{n\times n}$ such that $PAP^{-1}=B$, then we say $A$ is similar to $B$ and denote by $A\sim B$. In particular, when the invertible matrix turns to a unitary matrix $U$, we say $A$ is unitarily equivalent to $B$ and denote by $A\sim_{u} B$.

\noindent\textbf{Dual quaternions.}\ \ For a dual quaternion $q\in\mathbb{DQ}$, it has the form $q=q_{st}+q_{\mathcal{I}}\epsilon$, where $q_{st},q_{\mathcal{I}}\in\mathbb{Q}$ are the standard part and the infinitesimal part of $q$, respectively. The conjugation of $q$ is defined to be $q^{\ast}=q_{st}^{\ast}+q_{\mathcal{I}}^{\ast}\epsilon$. Hence, if $q=q^{\ast}$, then $q\in\mathbb{D}$.

Now we let $p,q\in\mathbb{DQ}$, where $p=p_{st}+p_{\mathcal{I}}\epsilon$ and $q=q_{st}+q_{\mathcal{I}}\epsilon$. Define their addition and multiplication as
$$
p+q=(p_{st}+q_{st})+(p_{\mathcal{I}}+q_{\mathcal{I}})\epsilon,~ pq=p_{st}q_{st}+(p_{st}q_{\mathcal{I}}+p_{\mathcal{I}}q_{st})\epsilon.
$$
It is not hard to verify that a dual number is commutative with any dual quaternion. For $p\in\mathbb{DQ}$, if there exists $q\in\mathbb{DQ}$ such that $pq=qp=1$, then we say $p$ is invertible and $q$ is the inverse of $p$. It follows that $p$ is invertible if and only if $p$ is appreciable and
$$
p^{-1}=p_{st}^{-1}-p_{st}^{-1}p_{\mathcal{I}}p_{st}^{-1}\epsilon.
$$
For $p=p_{st}+p_{\mathcal{I}}\epsilon$, the modulus of $p$ is defined as
\begin{enumerate}
\item if $p_{st}=0$, $|p|=|p_{\mathcal{I}}|\epsilon$, where $|p_{\mathcal{I}}|$ denotes the modulus of the quaternion $p_{\mathcal{I}}$. That is, if we set $p_{\mathcal{I}}=y_{0}+y_{1}\boldsymbol{i}+y_{2}\boldsymbol{j}+y_{3}\boldsymbol{k}$, then
    $$
    |p_{\mathcal{I}}|=\sqrt{y_{0}^{2}+y_{1}^{2}+y_{2}^{2}+y_{3}^{2}};
    $$
\item if $p_{st}\neq 0$, $|p|=|p_{st}|+\frac{p_{st}\cdot p_{\mathcal{I}}}{|p_{st}|}\epsilon$, where $p_{st}\cdot p_{\mathcal{I}}$ is the dot product of $p_{st}$ and $p_{\mathcal{I}}$. Specifically, if we let $p_{st}=x_{0}+x_{1}\boldsymbol{i}+x_{2}\boldsymbol{j}+x_{3}\boldsymbol{k}$, $p_{\mathcal{I}}=y_{0}+y_{1}\boldsymbol{i}+y_{2}\boldsymbol{j}+y_{3}\boldsymbol{k}$, then
$$
p_{st}\cdot p_{\mathcal{I}}=x_{0}y_{0}+x_{1}y_{1}+x_{2}y_{2}+x_{3}y_{3}.
$$
Direct calculation gives $p_{st}\cdot p_{\mathcal{I}}=\frac{p_{st}p_{\mathcal{I}}^{\ast}+p_{\mathcal{I}}p_{st}^{\ast}}{2}$ and by the definition of modulus for $p\in \mathbb{DQ}$, we can see that $|p|$ is a nonnegative dual number. If $|p|=1$, then $p$ is called a unit dual quaternion. A unit dual quaternion $p$ must be invertible and $p^{-1}=p^{\ast}$ always holds. Further, $|p|=1$ if and only if $|p_{st}|=1$ and $p_{st}\cdot p_{\mathcal{I}}=0$.
\end{enumerate}

\noindent\textbf{Eigenvalues of dual quaternion matrices.}\ \ We use notations $\mathbb{DC}^{n\times n}$ and $\mathbb{DQ}^{n\times n}$ to denote the set of $n\times n$ dual complex matrices and $n\times n$ dual quaternion matrices. For a matrix $A$ in $\mathbb{DQ}^{n\times n}$, notations $\overline{A}$, $A^{T}$ and $A^{\ast}$ still denote the conjugation, transpose and conjugate transpose of $A$, respectively. More precisely, if we set $A=A_{st}+A_{\mathcal{I}}\epsilon$, then
$$
\overline{A}=\overline{A_{st}}+\overline{A_{\mathcal{I}}}\epsilon,\ A^{T}=A_{st}^{T}+A_{\mathcal{I}}^{T}\epsilon,\ A^{\ast}=A_{st}^{\ast}+A_{\mathcal{I}}^{\ast}\epsilon.
$$

In \cite{QL}, the authors for the first time provided the definitions of left and right eigenvalues for dual quaternion matrices in $\mathbb{DQ}^{n\times n}$ and investigated their related properties. Suppose that $A\in \mathbb{DQ}^{n\times n}$. If there exist $\lambda\in\mathbb{DQ}$ and $\boldsymbol{x}\in\mathbb{DQ}^{n}$, where $\boldsymbol{x}$ is appreciable, such that
$$
A\boldsymbol{x}=\boldsymbol{x}\lambda,
$$
then we say that $\lambda$ is a right eigenvalue of $A$, with $\boldsymbol{x}$ as an associated right eigenvector. They proved in \cite{QL} that a right eigenvalue $\lambda$ of a Hermitian matrix $A\in\mathbb{DQ}^{n\times n}$ must be a dual number and a dual quaternion Hermitian matrix has at most $n$ dual number eigenvalues and no other right eigenvalues.

\section{Unitary diagonalization of skew-Hermitian dual quaternion matrices}
In this section, we study the diagonalization of skew-Hermitian dual quaternion matrices under unitary equivalence. First of all, we introduce and prove some conclusions, which will be helpful for proving our main theorem in this section.

\begin{lemma}\label{264272}
Let $N\in\mathbb{C}^{n\times n}$. Then $N$ is a symmetric complex matrix if and only if $\boldsymbol{j}N$ is a skew-Hermitian quaternion matrix.
\end{lemma}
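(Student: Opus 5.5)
The plan is a direct computation of $(\boldsymbol{j}N)^{\ast}$ and a comparison with $-\boldsymbol{j}N$. The only quaternionic fact needed is the commutation rule $z\boldsymbol{j}=\boldsymbol{j}\overline{z}$ for every complex number $z$. It follows from $\boldsymbol{i}\boldsymbol{j}=-\boldsymbol{j}\boldsymbol{i}$ by writing $z=a+b\boldsymbol{i}$ with $a,b\in\mathbb{R}$. Applied entrywise, it gives $N\boldsymbol{j}=\boldsymbol{j}\overline{N}$ for any $N\in\mathbb{C}^{n\times n}$.

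First I compute the conjugate transpose. For quaternion matrices, $(AB)^{\ast}=B^{\ast}A^{\ast}$, because quaternion conjugation reverses products. Hence
$$
(\boldsymbol{j}N)^{\ast}=N^{\ast}\overline{\boldsymbol{j}}=-N^{\ast}\boldsymbol{j}=-\boldsymbol{j}\,\overline{N^{\ast}}=-\boldsymbol{j}N^{T}.
$$
The third equality applies the commutation rule to the complex matrix $N^{\ast}$. The last equality holds because $\overline{N^{\ast}}=N^{T}$ for complex matrices.

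Next I compare with $-\boldsymbol{j}N$. By the formula above, $\boldsymbol{j}N$ is skew-Hermitian, i.e. $(\boldsymbol{j}N)^{\ast}=-\boldsymbol{j}N$, if and only if $-\boldsymbol{j}N^{T}=-\boldsymbol{j}N$. Since $\boldsymbol{j}$ is invertible, this holds if and only if $N^{T}=N$. This settles both directions at once.

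There is no real obstacle. The only point needing care is the order of factors and conjugates when applying $(AB)^{\ast}=B^{\ast}A^{\ast}$ in the noncommutative setting. To avoid slips, I can check the key identity entrywise instead: the $(k,l)$ entry of $(\boldsymbol{j}N)^{\ast}$ is $\overline{\boldsymbol{j}n_{lk}}=\overline{n_{lk}}\,(-\boldsymbol{j})=-\boldsymbol{j}n_{lk}$. Comparing this with the $(k,l)$ entry $-\boldsymbol{j}n_{kl}$ of $-\boldsymbol{j}N$ gives $n_{lk}=n_{kl}$ for all $k,l$.
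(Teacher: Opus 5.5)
Your proof is correct and follows essentially the same route as the paper. Both compute $(\boldsymbol{j}N)^{\ast}=-N^{\ast}\boldsymbol{j}$, use the rule $z\boldsymbol{j}=\boldsymbol{j}\overline{z}$ to move $\boldsymbol{j}$ past a complex matrix, and cancel $\boldsymbol{j}$; the only cosmetic difference is that the paper writes $\boldsymbol{j}N=\overline{N}\boldsymbol{j}$ and compares $N^{\ast}=\overline{N}$, whereas you move $\boldsymbol{j}$ to the left and compare $N^{T}=N$ directly.
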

\begin{proof}
Note that for $N\in\mathbb{C}^{n\times n}$, $\boldsymbol{j}N=\overline{N}\boldsymbol{j}$. The fact that $\boldsymbol{j}N$ is skew-Hermitian is equivalent to $N^{\ast}\boldsymbol{j}=\boldsymbol{j}N(=\overline{N}\boldsymbol{j})$. Then, $\boldsymbol{j}N$ is skew-Hermitian if and only if
$N^{T}=N$, i.e., $N$ is symmetric complex.
\end{proof}
\begin{lemma}[\cite{HZ}]\label{264271}
Assume $A\in\mathbb{Q}^{n\times n}$ is skew-Hermitian. Then there is a unitary quaternion matrix $U$ and a diagonal matrix $\Lambda$ such that
\begin{equation}\label{88}
UAU^{\ast}=\Lambda=\mathrm{diag}(r_{1}I_{k_{1}}\boldsymbol{i},r_{2}I_{k_{2}}\boldsymbol{i},\ldots,r_{r}I_{k_{r}}\boldsymbol{i}),
\end{equation}
where $0\leq r_{1}<\cdots<r_{r}$, $I_{k_{j}}$ is a $k_{j}\times k_{j}$ identity matrix and $\sum_{j=1}^{r}k_{j}=n$.
\end{lemma}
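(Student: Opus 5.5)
We plan to prove Lemma \ref{264271} by reducing to the normal form of a skew-Hermitian quaternion matrix under unitary similarity, using the complex adjoint representation or, more directly, an eigenvector argument. Since $A$ is skew-Hermitian, it is in particular normal ($AA^{\ast}=A^{\ast}A=-A^{2}$). We would first invoke the standard fact that every quaternion matrix has a right eigenvalue, and that each right eigenvalue class contains a complex representative with nonnegative imaginary part. So there is a unit vector $\boldsymbol{x}\in\mathbb{Q}^{n}$ with $A\boldsymbol{x}=\boldsymbol{x}\lambda$, where $\lambda\in\mathbb{C}$. Replacing $\boldsymbol{x}$ by $\boldsymbol{x}q$ for a suitable unit quaternion $q$ conjugates $\lambda$ to $\bar q\lambda q$, so we may take $\lambda=a+b\boldsymbol{i}$ with $b\ge 0$.

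Next we show $\lambda$ is purely imaginary. Since $\boldsymbol{x}^{\ast}\boldsymbol{x}=1$, we have $\lambda=\boldsymbol{x}^{\ast}A\boldsymbol{x}$. Then
\[
\overline{\lambda}=\boldsymbol{x}^{\ast}A^{\ast}\boldsymbol{x}=-\boldsymbol{x}^{\ast}A\boldsymbol{x}=-\lambda,
\]
so $a=0$ and $\lambda=r\boldsymbol{i}$ with $r\ge 0$. Now extend $\boldsymbol{x}$ to a unitary matrix $V$ with first column $\boldsymbol{x}$, via Gram--Schmidt over $\mathbb{Q}$. Then $V^{\ast}AV$ has first column $(r\boldsymbol{i},0,\dots,0)^{T}$. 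Because $V^{\ast}AV$ is again skew-Hermitian, its first row is $(r\boldsymbol{i},0,\dots,0)$, so $V^{\ast}AV=\mathrm{diag}(r\boldsymbol{i},A_{1})$ with $A_{1}$ a skew-Hermitian matrix of size $n-1$. Induction on $n$ then gives a unitary $W$ with $W^{\ast}AW$ diagonal with entries $r_{l}\boldsymbol{i}$, $r_{l}\ge 0$. A permutation matrix, which is unitary, reorders the entries increasingly and groups equal values into blocks $r_{j}I_{k_{j}}\boldsymbol{i}$. Setting $U=(WP)^{\ast}$ gives $UAU^{\ast}=\Lambda$ as in \eqref{88}.

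The main obstacle is the existence of a right eigenvalue for a quaternion matrix, together with its reduction to a complex value with nonnegative imaginary part. We would take this from the complex adjoint representation. Write $A=A_{1}+A_{2}\boldsymbol{j}$ with $A_{1},A_{2}\in\mathbb{C}^{n\times n}$, and set
\[
\chi_{A}=\begin{pmatrix}A_{1}&A_{2}\\-\overline{A_{2}}&\overline{A_{1}}\end{pmatrix}.
\]
A complex eigenvector $(\boldsymbol{u},\boldsymbol{v})$ of $\chi_{A}$ with eigenvalue $\mu$ yields a quaternion vector $\boldsymbol{u}-\overline{\boldsymbol{v}}\boldsymbol{j}$, up to the sign convention, satisfying the right eigen-equation with eigenvalue $\mu$. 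Conjugation by $\boldsymbol{j}$ then sends $\mu$ to $\bar\mu$, which handles the sign of the imaginary part. Everything after this step is routine.
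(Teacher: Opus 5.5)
Your proof is correct and complete. The right eigenvector from $\chi_A$ checks out (your convention $\boldsymbol{x}=\boldsymbol{u}-\overline{\boldsymbol{v}}\boldsymbol{j}$ is exactly right for $\chi_A$ as you wrote it), as do the normalization $\overline{\boldsymbol{j}}\mu\boldsymbol{j}=\overline{\mu}$, the identity $\overline{\lambda}=\boldsymbol{x}^{\ast}A^{\ast}\boldsymbol{x}=-\lambda$, and the deflation via the skew-Hermitian symmetry of $V^{\ast}AV$. The paper gives no proof of its own: it imports the lemma from Horn and Zhang, where it follows from the unitary diagonalizability of normal quaternion matrices, and your eigenvector-deflation argument is the standard proof of that fact specialized to the skew-Hermitian case, so it is essentially the same route made self-contained.
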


\begin{lemma}[\cite{FZZ}]\label{264281}
If $A\in\mathbb{C}^{n\times n}$ is symmetric, then there exists a unitary matrix $U\in\mathbb{C}^{n\times n}$ and a nonnegative diagonal matrix $\Upsilon$ such that
$$
U^{T}AU=\Upsilon.
$$
\end{lemma}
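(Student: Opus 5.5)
The statement is the Autonne--Takagi factorization of a complex symmetric matrix, which the paper cites from \cite{FZZ}. If I were to prove it directly, I would use the singular value decomposition together with induction on $n$.

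\textbf{Reduction to an invariant eigenvector.} Consider the conjugate-linear map $x\mapsto A\overline{x}$ on $\mathbb{C}^{n}$. Since $A=A^{T}$, the matrix $A\overline{A}=AA^{\ast}$ is Hermitian and positive semidefinite. Let $\sigma^{2}$ be its largest eigenvalue, with $\sigma\ge 0$, and let $E$ be the corresponding eigenspace. If $x\in E$, then
$$
(A\overline{A})(A\overline{x})=A\,\overline{(A\overline{A}x)}=\sigma^{2}A\overline{x},
$$
so the conjugate-linear map $J:x\mapsto A\overline{x}$ sends $E$ to itself and satisfies $J^{2}=\sigma^{2}$ on $E$.

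\textbf{Finding a Takagi vector.} If $\sigma=0$, then $A=0$ and the claim is trivial. Otherwise, take any nonzero $v\in E$ and set $w=v+\sigma^{-1}Jv$. A direct computation gives $Jw=\sigma w$. If $w=0$, then $Jv=-\sigma v$, and replacing $v$ by $\boldsymbol{i}v$ (that is, multiplying by the complex unit) gives $J(\boldsymbol{i}v)=\sigma\boldsymbol{i}v$. In either case we obtain a unit vector $u$ with $A\overline{u}=\sigma u$.

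\textbf{Induction step.} Extend $u$ to a unitary matrix $W=[\overline{u},W_{2}]$. Then $W^{T}AW$ is symmetric, and its $(1,1)$ entry is $u^{\ast}A\overline{u}=\sigma$. Its first column is $W^{T}A\overline{u}=\sigma W^{T}u$. Since $W^{T}u=\overline{W^{\ast}\overline{u}}=e_{1}$, this column equals $\sigma e_{1}$. By symmetry the first row is also $\sigma e_{1}^{T}$, so
$$
W^{T}AW=\sigma\oplus A_{2}
$$
with $A_{2}$ symmetric of size $n-1$. Applying induction to $A_{2}$ and taking the direct sum with $1$ yields the required unitary $U$ and nonnegative diagonal $\Upsilon$.

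\textbf{Main obstacle.} The delicate step is producing the vector $u$ with $A\overline{u}=\sigma u$ and $\sigma$ real and nonnegative. Conjugate-linearity means an ordinary eigenvector argument does not apply directly. The device that fixes this is the averaging $v+\sigma^{-1}Jv$, with the fallback of multiplying $v$ by the complex unit when that sum vanishes. The rest of the argument is bookkeeping.
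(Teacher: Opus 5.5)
The paper gives no proof of this lemma. It is the classical Autonne--Takagi factorization, quoted from the cited reference and used only as a black box, to diagonalize the symmetric blocks $N_{ii}$ and $E^{ii}_{qq}$ in the proof of Theorem \ref{4}. There is no internal argument to compare with, and your proposal stands on its own as a correct, complete, self-contained proof. It is essentially the standard textbook argument.

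The key computations all check out:
\begin{itemize}
\item Since $J^{2}v=A\overline{A}v=\sigma^{2}v$ and $\sigma^{-1}$ is real, $\sigma^{-1}$ passes through the conjugate-linear map $J$, so $Jw=\sigma w$.
\item In the degenerate case, $J(\boldsymbol{i}v)=-\boldsymbol{i}Jv=\sigma\boldsymbol{i}v$.
\item Normalizing by a positive real scalar preserves $A\overline{u}=\sigma u$.
\item The identity $W^{T}u=\overline{W^{\ast}\overline{u}}=e_{1}$ gives the splitting $W^{T}AW=\sigma\oplus A_{2}$ with $A_{2}$ symmetric.
\item $U=W(1\oplus U_{2})$ completes the induction. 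The case $n=1$ is the same step with no $A_{2}$, so no separate base case is needed.
\end{itemize}

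Two cosmetic remarks:
\begin{itemize}
\item You never actually use the singular value decomposition, only an eigenspace of the Hermitian matrix $AA^{\ast}$.
\item The $J$-invariance of $E$ is not needed. The construction of $w$ uses only $J^{2}v=\sigma^{2}v$ for the chosen $v$.
\end{itemize}
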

\begin{lemma}[\cite{FZZ}]\label{12}
For any complex matrix $C$, the equality $C\boldsymbol{j}=\boldsymbol{j}\overline{C}$ holds.
\end{lemma}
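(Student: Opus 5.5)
The statement to prove is Lemma~\ref{12}: for any complex matrix $C$, $C\boldsymbol{j}=\boldsymbol{j}\overline{C}$. Since matrix multiplication by the scalar $\boldsymbol{j}$ acts entrywise, the plan is to reduce to a single complex number. Write $C=(c_{st})$. The $(s,t)$ entry of $C\boldsymbol{j}$ is $c_{st}\boldsymbol{j}$, and that of $\boldsymbol{j}\overline{C}$ is $\boldsymbol{j}\,\overline{c_{st}}$. So it suffices to show $c\boldsymbol{j}=\boldsymbol{j}\overline{c}$ for every $c\in\mathbb{C}$.

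For the scalar identity, write $c=a+b\boldsymbol{i}$ with $a,b\in\mathbb{R}$. Real numbers commute with $\boldsymbol{j}$, so the only point is how $\boldsymbol{i}$ moves past $\boldsymbol{j}$. From $\boldsymbol{i}^2=\boldsymbol{j}^2=\boldsymbol{k}^2=\boldsymbol{ijk}=-1$ we get $\boldsymbol{ij}=\boldsymbol{k}$ and $\boldsymbol{ji}=-\boldsymbol{k}$, hence $\boldsymbol{i}\boldsymbol{j}=-\boldsymbol{j}\boldsymbol{i}$. Therefore
$$
c\boldsymbol{j}=a\boldsymbol{j}+b\boldsymbol{i}\boldsymbol{j}=\boldsymbol{j}a-\boldsymbol{j}b\boldsymbol{i}=\boldsymbol{j}(a-b\boldsymbol{i})=\boldsymbol{j}\overline{c}.
$$

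Applying this entrywise gives the matrix identity. There is no real obstacle here; the only care needed is deriving $\boldsymbol{ji}=-\boldsymbol{k}$ correctly from the defining relations. From $\boldsymbol{ijk}=-1$, multiplying on the right by $\boldsymbol{k}$ gives $\boldsymbol{ij}(-1)=-\boldsymbol{k}$, so $\boldsymbol{ij}=\boldsymbol{k}$. Then $\boldsymbol{ji}=\overline{\boldsymbol{i}}\,\overline{\boldsymbol{j}}=\overline{\boldsymbol{ij}}=\overline{\boldsymbol{k}}=-\boldsymbol{k}$, using that conjugation reverses products and $\overline{\boldsymbol{i}}=-\boldsymbol{i}$, $\overline{\boldsymbol{j}}=-\boldsymbol{j}$. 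Equivalently, $\boldsymbol{ji}=\boldsymbol{j}(\boldsymbol{jk})=-\boldsymbol{k}$, since $\boldsymbol{jk}=\boldsymbol{i}$ by the cyclic relations.
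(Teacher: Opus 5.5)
Your proof is correct: the entrywise reduction to $c\boldsymbol{j}=\boldsymbol{j}\overline{c}$ for $c=a+b\boldsymbol{i}$, using $\boldsymbol{ij}=-\boldsymbol{ji}$, is the standard verification, and the paper gives no proof of its own (it quotes the lemma from the literature). One small slip: in your first derivation of $\boldsymbol{ji}=-\boldsymbol{k}$ the middle term should be $\overline{\boldsymbol{j}}\,\overline{\boldsymbol{i}}$, since conjugation reverses products and $\overline{\boldsymbol{i}}\,\overline{\boldsymbol{j}}=\boldsymbol{ij}=\boldsymbol{k}$; your second derivation via $\boldsymbol{jk}=\boldsymbol{i}$ is valid on its own, so the argument is unaffected.
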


\begin{lemma}[\cite{Z}]\label{1}
Assume $p,q\in\mathbb{Q}$. Then $p\sim q$ if and only if $|p|=|q|$ and $p_{0}=q_{0}$.
\end{lemma}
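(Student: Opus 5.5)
To prove Lemma~\ref{1}, I will pass to the standard polar form of a quaternion and use the fact that the unit pure imaginary quaternions $\mathbb{S}$ are permuted transitively by conjugation $x\mapsto uxu^{-1}$ with $u\in\mathbb{S}^{3}$.

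\emph{Necessity.} Suppose $q=upu^{-1}$ for some invertible $u\in\mathbb{Q}$. Since $|\cdot|$ is multiplicative, $|q|=|u||p||u|^{-1}=|p|$. For the real part I use $\mathrm{Re}(ab)=\mathrm{Re}(ba)$, which is immediate from $\mathrm{Re}(x)=(x+\overline{x})/2$ and $\overline{ab}=\overline{b}\,\overline{a}$. This gives $q_{0}=\mathrm{Re}(u(pu^{-1}))=\mathrm{Re}(pu^{-1}u)=p_{0}$.

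\emph{Sufficiency.} Write $p=p_{0}+\mathrm{Im}(p)$ and $q=q_{0}+\mathrm{Im}(q)$ with $p_{0}=q_{0}$ and $|p|=|q|$. Then $|\mathrm{Im}(p)|=|\mathrm{Im}(q)|=:s$.
\begin{enumerate}
\item If $s=0$, then $p=q$ is real and there is nothing to prove.
\item Otherwise set $\mu=\mathrm{Im}(p)/s$ and $\nu=\mathrm{Im}(q)/s$, both in $\mathbb{S}$. Since real scalars commute with everything, $upu^{-1}=p_{0}+s\,u\mu u^{-1}$. It therefore suffices to find a unit $u$ with $u\mu u^{-1}=\nu$.
\end{enumerate}

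\emph{Key step: constructing $u$.} For $\mu,\nu\in\mathbb{S}$, regard them as vectors in $\mathbb{R}^{3}$ and recall $\mu\nu=-\mu\cdot\nu+\mu\times\nu$.
\begin{enumerate}
\item If $\nu\neq-\mu$, take $u=1-\nu\mu$, which equals $1+\mu\cdot\nu-\mu\times\nu$. Its real part is $1+\mu\cdot\nu\neq0$, so $u\neq 0$. Using $\mu^{2}=\nu^{2}=-1$, I get $u\mu=\mu-\nu\mu^{2}=\mu+\nu$ and $\nu u=\nu-\nu^{2}\mu=\nu+\mu$. Hence $u\mu=\nu u$, i.e.\ $u\mu u^{-1}=\nu$.
\item If $\nu=-\mu$, choose any $w\in\mathbb{S}$ orthogonal to $\mu$. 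Then $w$ anticommutes with $\mu$, so $w\mu w^{-1}=-\mu w w^{-1}=-\mu=\nu$.
\end{enumerate}

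The only real obstacle is this construction, specifically the antipodal case $\nu=-\mu$. The identity $u\mu=\nu u$ is a two-line check once $\mu^{2}=\nu^{2}=-1$ is used.
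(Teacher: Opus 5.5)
Your proof is correct. The paper gives no proof of this lemma; it simply quotes it from \cite{Z}. The relevant comparison is therefore with the usual argument there, which goes through a normal form. One shows that every quaternion $p$ with $\mathrm{Im}(p)=a_2\boldsymbol{i}+a_3\boldsymbol{j}+a_4\boldsymbol{k}\neq 0$ is similar to the complex number $p_0+s\boldsymbol{i}$, where $s=|\mathrm{Im}(p)|$, by writing down an explicit conjugator in coordinates. The lemma then follows by transitivity, because this representative depends only on $p_0$ and $|p|$.

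Your construction contains this as a special case. With $\mu=\boldsymbol{i}$ and $\nu=\mathrm{Im}(p)/s$ you get $s\,u=(s+a_2)-a_4\boldsymbol{j}+a_3\boldsymbol{k}$. Your antipodal case is the use of $\boldsymbol{j}$ when $\mathrm{Im}(p)$ is a negative multiple of $\boldsymbol{i}$. What your version buys is a coordinate-free formula that joins any two unit pure quaternions directly, with no intermediate normal form. What the normal-form version buys is the statement ``every quaternion is similar to a complex number''. That is the form in which the paper actually uses the lemma in Lemma \ref{2}.

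Two small points.
\begin{enumerate}
\item The expansion should read $1-\nu\mu=1+\mu\cdot\nu+\mu\times\nu$; you have the sign of the cross product reversed. This is harmless, since you only use the real part $1+\mu\cdot\nu$ and the identity $u\mu=\nu u$, which you verify from $u=1-\nu\mu$ directly.
\item You should say explicitly that $u\mu u^{-1}$ is unchanged when $u$ is replaced by $u/|u|$, and that $w$ is already a unit. So the conjugator can always be chosen in $\mathbb{S}^{3}$. This unitary form of the lemma is exactly what the paper invokes in Lemma \ref{2} and Proposition \ref{5}.
\end{enumerate}
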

\begin{lemma}\label{2}
Let $p=a\boldsymbol{i}+(b\boldsymbol{i}+c\boldsymbol{j})\epsilon\in\mathbb{DQ}$, where $a,b,c\in\mathbb{R}$. Then $p$ is unitarily equivalent to a dual complex number.
\end{lemma}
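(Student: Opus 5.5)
We look for a unit dual quaternion $u=u_{st}+u_{\mathcal{I}}\epsilon$ with $upu^{\ast}$ dual complex. The natural choice is $u_{st}=1$, so $u=1+w\epsilon$. The condition $|u|=1$ reduces to $1\cdot w=0$, i.e.\ $\mathrm{Re}(w)=0$, so $w$ must be a pure imaginary quaternion. Then $u^{\ast}=1-w\epsilon$, and expanding with $\epsilon^{2}=0$ gives
$$
upu^{\ast}=a\boldsymbol{i}+\bigl(b\boldsymbol{i}+c\boldsymbol{j}+a(w\boldsymbol{i}-\boldsymbol{i}w)\bigr)\epsilon .
$$
So the whole problem is to choose a pure imaginary $w$ so that $a(w\boldsymbol{i}-\boldsymbol{i}w)$ cancels the $c\boldsymbol{j}$ term.

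\textbf{Case $a\neq 0$.} Take $w=t\boldsymbol{k}$ with $t\in\mathbb{R}$. Then
$$
w\boldsymbol{i}-\boldsymbol{i}w=t(\boldsymbol{k}\boldsymbol{i}-\boldsymbol{i}\boldsymbol{k})=t(\boldsymbol{j}+\boldsymbol{j})=2t\boldsymbol{j}.
$$
Choosing $t=-c/(2a)$ gives $upu^{\ast}=a\boldsymbol{i}+b\boldsymbol{i}\epsilon$, which is dual complex. Here $u=1-\frac{c}{2a}\boldsymbol{k}\epsilon$ is a unit dual quaternion, since its standard part is $1$ and its dual part is pure imaginary.

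\textbf{Case $a=0$.} Now $p=(b\boldsymbol{i}+c\boldsymbol{j})\epsilon$, and conjugating by $1+w\epsilon$ has no effect, so we use a nontrivial standard part instead. Take $u=q\in\mathbb{Q}$ with $|q|=1$ and $u_{\mathcal{I}}=0$, which is a unit dual quaternion. Then $upu^{\ast}=q(b\boldsymbol{i}+c\boldsymbol{j})\overline{q}\,\epsilon$. If $b=c=0$ there is nothing to prove. Otherwise $b\boldsymbol{i}+c\boldsymbol{j}$ and $\sqrt{b^{2}+c^{2}}\,\boldsymbol{i}$ have the same modulus and the same (zero) real part. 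By Lemma \ref{1} they are similar, and the similarity can be taken to be a unit quaternion $q$, since rescaling $q$ by a positive real does not change $qx q^{-1}$. Explicitly, $q$ is the rotation of pure imaginary quaternions carrying the direction $(b,c,0)$ to $(1,0,0)$, namely a rotation about the $\boldsymbol{k}$-axis. This gives $upu^{\ast}=\sqrt{b^{2}+c^{2}}\,\boldsymbol{i}\epsilon$, which is dual complex.

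The only step that needs care is the commutator computation in the case $a\neq0$, in particular checking the sign of $\boldsymbol{k}\boldsymbol{i}-\boldsymbol{i}\boldsymbol{k}=2\boldsymbol{j}$. The remaining checks are that $u$ is unit and that $u^{-1}=u^{\ast}$, which holds by the preliminaries.
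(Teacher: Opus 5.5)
Your proof is correct and follows the paper's argument. In the case $a\neq 0$, your $u=1-\frac{c}{2a}\boldsymbol{k}\epsilon$ is exactly $v^{\ast}$ for the paper's $v=1+\frac{c}{2a}\boldsymbol{k}\epsilon$; you also rightly note that it is already a unit dual quaternion, so the paper's normalization by $|u|=1$ is vacuous. In the case $a=0$ you conjugate by a unit quaternion obtained from Lemma~\ref{1}, just as the paper does.
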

\begin{proof}
If $a=0$, then $p=(b\boldsymbol{i}+c\boldsymbol{j})\epsilon$. By Lemma \ref{1}, the infinitesimal part $b\boldsymbol{i}+c\boldsymbol{j}$ is similar to the complex number $\sqrt{b^{2}+c^{2}}\boldsymbol{i}\in\mathbb{C}\subseteq\mathbb{DC}$. That is, there is a nonzero quaternion $s$ such that
$$
sps^{-1}=\sqrt{b^{2}+c^{2}}\boldsymbol{i}\epsilon.
$$
In particular, if we take $s$ to have unit norm, then the similarity becomes a unitary equivalence.

If $a\neq 0$, let $u=1+\frac{c\boldsymbol{k}}{2a}\epsilon$ and $v=\frac{u}{|u|}$. Then $v^{\ast}v=vv^{\ast}=1$ and
$$
v^{\ast}pv=v^{\ast}(a\boldsymbol{i})v+v^{\ast}(b\boldsymbol{i}+c\boldsymbol{j})v\epsilon=\frac{a\boldsymbol{i}+b\boldsymbol{i}\epsilon}{|u|^{2}}\in\mathbb{DC}.
$$
Therefore, whether $a$ is zero or not, $p$ is unitarily equivalent to a dual complex number.
 \end{proof}
 The result in the next Lemma may be well-known, its one-dimensional case was  mentioned in \cite{QWL} and it is easy to obtain the proof by straightforward computation.
\begin{lemma}\label{99}
Let $P=P_{st}+P_{\mathcal{I}}\epsilon\in\mathbb{DQ}^{n\times n}$. Then $P$ is unitary in $\mathbb{DQ}^{n\times n}$ if and only if $P_{st}$ is unitary $\mathbb{Q}^{n\times n}$ and $P_{st}P_{\mathcal{I}}^{\ast}$ is skew-Hermitian. 
 \end{lemma}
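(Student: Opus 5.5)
Expand the defining identity $PP^{\ast}=I_n$ into its standard and infinitesimal parts, using the multiplication rule for dual quaternion matrices. For $P=P_{st}+P_{\mathcal{I}}\epsilon$ we have $P^{\ast}=P_{st}^{\ast}+P_{\mathcal{I}}^{\ast}\epsilon$. Since $\epsilon$ commutes with quaternion matrices and $\epsilon^{2}=0$, this gives
$$
PP^{\ast}=P_{st}P_{st}^{\ast}+\left(P_{st}P_{\mathcal{I}}^{\ast}+P_{\mathcal{I}}P_{st}^{\ast}\right)\epsilon .
$$
Likewise,
$$
P^{\ast}P=P_{st}^{\ast}P_{st}+\left(P_{st}^{\ast}P_{\mathcal{I}}+P_{\mathcal{I}}^{\ast}P_{st}\right)\epsilon .
$$
A dual quaternion matrix equals $I_n$ if and only if its standard part is $I_n$ and its infinitesimal part is $0$. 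So unitarity of $P$ means that both $PP^{\ast}=I_n$ and $P^{\ast}P=I_n$ hold componentwise.

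For the forward direction, assume $P$ is unitary. The standard parts give $P_{st}P_{st}^{\ast}=I_n=P_{st}^{\ast}P_{st}$, so $P_{st}$ is unitary in $\mathbb{Q}^{n\times n}$. The infinitesimal part of $PP^{\ast}$ gives $P_{st}P_{\mathcal{I}}^{\ast}+P_{\mathcal{I}}P_{st}^{\ast}=0$. Since $(P_{st}P_{\mathcal{I}}^{\ast})^{\ast}=P_{\mathcal{I}}P_{st}^{\ast}$ (the reversal rule $(AB)^{\ast}=B^{\ast}A^{\ast}$ holds for quaternion matrices), this says exactly that $P_{st}P_{\mathcal{I}}^{\ast}$ is skew-Hermitian.

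For the converse, assume $P_{st}$ is unitary and $S:=P_{st}P_{\mathcal{I}}^{\ast}$ is skew-Hermitian. Then $PP^{\ast}=I_n+(S+S^{\ast})\epsilon=I_n$. It remains to check $P^{\ast}P=I_n$, which is the only step that needs care, since over a dual quaternion ring one-sided inverses are not assumed to be two-sided.

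I would handle it by conjugating with the unitary $P_{st}$. From $S=P_{st}P_{\mathcal{I}}^{\ast}$ we get $P_{\mathcal{I}}^{\ast}=P_{st}^{\ast}S$ and $P_{\mathcal{I}}=S^{\ast}P_{st}=-SP_{st}$. Substituting into the infinitesimal part of $P^{\ast}P$ gives
$$
P_{st}^{\ast}P_{\mathcal{I}}+P_{\mathcal{I}}^{\ast}P_{st}=-P_{st}^{\ast}SP_{st}+P_{st}^{\ast}SP_{st}=0 .
$$
Hence $P^{\ast}P=I_n$ as well, so $P$ is unitary in $\mathbb{DQ}^{n\times n}$.

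Alternatively, one could invoke the fact that for square quaternion matrices a one-sided inverse is two-sided and lift it to dual matrices. The direct computation above avoids needing that fact.
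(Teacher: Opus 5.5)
Your proof is correct and is the straightforward computation the paper invokes without writing it out: split $PP^{\ast}=I_n$ into standard and infinitesimal parts, and note that $P_{\mathcal{I}}P_{st}^{\ast}=(P_{st}P_{\mathcal{I}}^{\ast})^{\ast}$. In the converse you also check that $P^{\ast}P=I_n$, via $P_{\mathcal{I}}=-SP_{st}$; the paper leaves this implicit, and it makes that direction complete without appealing to one-sided-inverse results.
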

\begin{theorem}\label{4}
Suppose that $A=A_{st}+A_{\mathcal{I}}\epsilon\in\mathbb{DQ}^{n\times n}$ is a skew-Hermitian matrix. Then there exists a unitary matrix $W\in\mathbb{DQ}^{n\times n}$ and a diagonal matrix $\Sigma\in\mathbb{DQ}^{n\times n}$ such that
$$
\Sigma=WAW^{\ast},
$$
 where
\begin{equation}\label{9}
\Sigma=\mathrm{diag}(r_{1}\boldsymbol{i}+R_{11}\boldsymbol{i}\epsilon,\ldots,r_{1}\boldsymbol{i}+R_{1k_{1}}\boldsymbol{i}\epsilon,r_{2}\boldsymbol{i}+R_{21}\boldsymbol{i}\epsilon,\ldots,r_{r}\boldsymbol{i}+R_{rk_{r}}\boldsymbol{i}\epsilon).
\end{equation}
Each diagonal entry of $\Sigma$ is a right eigenvalue of $A$ and
$$
0\leq r_{1}<\cdots<r_{r},\ \  R_{i1}\leq\cdots\leq R_{ik_{i}},\ \ R_{ij}\in\mathbb{R},\ \ \sum_{i=1}^{r}k_{i}=n.
$$  
\end{theorem}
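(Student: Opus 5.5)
First reduce the standard part. By Lemma \ref{264271} there is a unitary $U\in\mathbb{Q}^{n\times n}$ with $UA_{st}U^{\ast}=\Lambda=\mathrm{diag}(r_{1}I_{k_{1}}\boldsymbol{i},\ldots,r_{r}I_{k_{r}}\boldsymbol{i})$. Since $U$ is also unitary in $\mathbb{DQ}^{n\times n}$, replacing $A$ by $UAU^{\ast}$ we may assume $A_{st}=\Lambda$. Skew-Hermitian-ness of $A$ is equivalent to $A_{st}$ and $A_{\mathcal{I}}$ both being skew-Hermitian, so $B:=A_{\mathcal{I}}$ is skew-Hermitian. Partition $B=(B_{ij})$ conformally with the blocks of $\Lambda$.

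\textbf{Off-diagonal blocks.} We look for a unitary $W_{1}=I+P\epsilon$. By Lemma \ref{99} this requires $P^{\ast}$, equivalently $P$, to be skew-Hermitian. Then
\[
W_{1}AW_{1}^{\ast}=\Lambda+(B+P\Lambda-\Lambda P)\epsilon .
\]
For $i\neq j$ we must solve $P_{ij}r_{j}\boldsymbol{i}-r_{i}\boldsymbol{i}P_{ij}=-B_{ij}$, a Sylvester equation $X\mapsto Xr_{j}\boldsymbol{i}-r_{i}\boldsymbol{i}X$. This map acts entrywise as $x\mapsto x r_{j}\boldsymbol{i}-r_{i}\boldsymbol{i}x$ on $\mathbb{Q}$. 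Split $x=x_{1}+x_{2}\boldsymbol{j}$ with $x_{1},x_{2}\in\mathbb{C}$. Then $x_{1}\mapsto(r_{j}-r_{i})\boldsymbol{i}x_{1}$ and $x_{2}\boldsymbol{j}\mapsto -(r_{j}+r_{i})\boldsymbol{i}x_{2}\boldsymbol{j}$. Both coefficients are nonzero because $r_{i}\neq r_{j}$ and $r_{i}+r_{j}>0$ (at most one of them is $0$). So the map is bijective and $P_{ij}$ is solvable. We may take $P_{ji}=-P_{ij}^{\ast}$, which is consistent because $B_{ji}=-B_{ij}^{\ast}$. Set $P_{ii}=0$. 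This makes the infinitesimal part block diagonal with diagonal blocks $B_{ii}$.

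\textbf{Diagonal blocks.} This is the expected main obstacle: $r_{i}\boldsymbol{i}I$ does not commute with general quaternion unitaries, so we cannot simply diagonalize $B_{ii}$ by Lemma \ref{264271}.

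For $r_{i}>0$, I would first kill the $\boldsymbol{j}$-component using the freedom in $P_{ii}$. Write $B_{ii}=C+D\boldsymbol{j}$ with $C,D$ complex. The map $X\mapsto X r_{i}\boldsymbol{i}-r_{i}\boldsymbol{i}X$ annihilates complex $X$ and sends $Y\boldsymbol{j}\mapsto -2r_{i}\boldsymbol{i}Y\boldsymbol{j}$. Hence choosing $P_{ii}=\frac{1}{2r_{i}}\boldsymbol{i}^{-1}D\boldsymbol{j}$ removes $D\boldsymbol{j}$; this is the matrix analogue of Lemma \ref{2}. One must check that this $P_{ii}$ is skew-Hermitian; this should follow from $B_{ii}$ being skew-Hermitian, via Lemma \ref{12} (the $\boldsymbol{j}$-part of a skew-Hermitian quaternion matrix has $D$ symmetric).

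What remains is a complex skew-Hermitian $C$, so $C=\boldsymbol{i}H$ with $H$ Hermitian. Take a complex unitary $V$ with $VHV^{\ast}$ real diagonal, ordered increasingly. Because $V$ is complex, it commutes with $r_{i}\boldsymbol{i}I$, so conjugating by $V$ (block placed) preserves the standard part and yields entries $r_{i}\boldsymbol{i}+R_{ij}\boldsymbol{i}\epsilon$.

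For $r_{1}=0$ the standard block is $0$, so any quaternion unitary may be used. Apply Lemma \ref{264271} to $B_{11}$ directly, obtaining $R_{1j}\boldsymbol{i}$ with $R_{1j}\ge0$, sorted.

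The composite $W$ is a product of unitaries, hence unitary. Finally, $A W^{\ast}=W^{\ast}\Sigma$ shows that the columns of $W^{\ast}$ are appreciable right eigenvectors with the diagonal entries of $\Sigma$ as eigenvalues.
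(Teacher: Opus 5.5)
Your proof is correct. It follows the paper for the reduction of $A_{st}$, for the removal of the off-diagonal blocks by $I+P\epsilon$, and for the block with $r_1=0$ via Lemma \ref{264271}. Your splitting of the Sylvester map into complex and $\boldsymbol{j}$-parts is exactly where the paper's explicit $P_{st}$ comes from.

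The two proofs diverge on the diagonal blocks with $r_i>0$. The paper proceeds in several steps:
\begin{enumerate}
\item It diagonalizes the complex part $M_{ii}$ by a complex unitary.
\item It partitions the new $\boldsymbol{j}$-part $\boldsymbol{j}E^{ii}$ according to the distinct values $\rho_{iq}$ and kills only the off-diagonal sub-blocks, using the matrices $Q_h$.
\item It applies the Takagi factorization (Lemma \ref{264281}) to each $E^{ii}_{qq}$.
\item It removes each leftover scalar term $\boldsymbol{j}\theta\epsilon$ with Lemma \ref{2}.
\end{enumerate}
You instead use the diagonal blocks $P_{ii}$ of the same first-order correction to delete the whole $\boldsymbol{j}$-component in one stroke; this is the matrix form of Lemma \ref{2}. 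After that, a single complex unitary, which commutes with $r_i\boldsymbol{i}I$, finishes the block.

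The check you left hedged does hold. You have $P_{ii}=-\frac{1}{2r_i}D\boldsymbol{k}$ and $(D\boldsymbol{k})^{\ast}=-D^{T}\boldsymbol{k}$, so $P_{ii}$ is skew-Hermitian if and only if $D^{T}=D$. Comparing complex and $\boldsymbol{j}$-parts in $B_{ii}^{\ast}=-B_{ii}$, using $\boldsymbol{j}D^{\ast}=D^{T}\boldsymbol{j}$, gives exactly $D^{T}=D$.

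Both routes produce the same $\Sigma$: in either case the $R_{ij}$ are the eigenvalues of $-\boldsymbol{i}$ times the complex part of $B_{ii}$. Your version is shorter. It shows that neither the Takagi factorization nor the grouping by the $\rho_{iq}$ is needed. It also shows that, once the first-order correction is made, a genuinely quaternionic unitary is required only on the block with $r_1=0$. The paper's route gains mainly modularity, since each step invokes a named factorization. In fact, the paper's formula for $Q_h$, applied also with $s=t$, would already remove the sub-blocks $\boldsymbol{j}E^{hh}_{qq}$, which is essentially your observation.
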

\begin{proof}
Denote $A=A_{st}+A_{\mathcal{I}}\epsilon\in\mathbb{DQ}^{n\times n}$, where $A_{st},A_{\mathcal{I}}\in\mathbb{Q}^{n\times n}$. Then $A$ is  a skew-Hermitian dual quaternion matrix if and only if both $A_{st}$ and $A_{\mathcal{I}}$ are skew-Hermitian quaternion matrices. It follows from Lemma \ref{264271} that there is a unitary quaternion matrix $U\in\mathbb{Q}^{n\times n}$ and a diagonal matrix $\Lambda$ with the form in (\ref{88}) such that
$$
\Lambda=UA_{st}U^{\ast}.
$$
 Then
$$
\begin{aligned}
A_{1}&=UAU^{\ast}=\Lambda+UA_{\mathcal{I}}U^{\ast}\epsilon\\
&=\begin{pmatrix}
r_{1}I_{k_{1}}\boldsymbol{i}+C_{11}\epsilon & C_{12}\epsilon &  \cdots     &C_{1r}\epsilon  \\
-C_{12}^{\ast}\epsilon  &r_{2}I_{k_{2}}\boldsymbol{i}+C_{22}\epsilon &   \cdots    &  C_{2r}\epsilon\\
 \vdots & \vdots &\ddots & \vdots \\
 -C_{1r}^{\ast}\epsilon & -C_{2r}^{\ast}\epsilon &  \cdots     &r_{r}I_{k_{r}}\boldsymbol{i}+C_{rr}\epsilon
\end{pmatrix},
\end{aligned}
$$
where each $C_{ij}$ is a quaternion matrix of matching dimension and each $C_{ii}$ is a skew-Hermitian quaternion matrix. Let $C_{ij}=M_{ij}+\boldsymbol{j}N_{ij}$, where $M_{ij}$ and $N_{ij}$ are complex matrices with the same dimensions as $C_{ij}$. Define
$$
P=\begin{pmatrix}
P_{11}           &  P_{12} & \cdots & P_{1r}\\
-P_{12}^{\ast}   &  P_{22} & \cdots & P_{2r}\\
\vdots           &  \vdots &\ddots  & \vdots\\
-P_{1r}^{\ast}   &  -P_{2r}^{\ast} &\cdots & P_{rr}
\end{pmatrix},
$$
where for $1\leq s,t\leq r$, each $P_{st}$ is defined as follows,
\begin{enumerate}
\item [(1).]if $s=t$,  $P_{st}=I_{k_{s}}=I_{k_{t}}$;
\item [(2).] if $s<t$,  $P_{st}=\frac{M_{st}}{r_{s}\boldsymbol{i}-r_{t}\boldsymbol{i}}\epsilon+(r_{s}\boldsymbol{i}+r_{t}\boldsymbol{i})^{-1}\boldsymbol{j}N_{st}\epsilon$;

\item [(3).] if $s>t$,
$P_{st}=-(P_{ts})^{\ast}=
\frac{M_{ts}^{\ast}}{r_{t}\boldsymbol{i}-r_{s}\boldsymbol{i}}\epsilon-N_{ts}^{\ast}\boldsymbol{j}(r_{t}\boldsymbol{i}+r_{s}\boldsymbol{i})^{-1}\epsilon$.
\end{enumerate}
Since the matrix $P$ is the sum of the identity matrix $I_n$ and a skew-Hermitian matrix, it follows from Lemma \ref{99} that  $P$ is unitary and
$$
\begin{aligned}
A_{2}=PA_{1}P^{\ast}&=\mathrm{diag}(r_{1}I_{k_{1}}\boldsymbol{i}+C_{11}\epsilon,\ldots,r_{r}I_{k_{r}}\boldsymbol{i}+C_{rr}\epsilon)\\
&=\mathrm{diag}(r_{1}I_{k_{1}}\boldsymbol{i}+M_{11}\epsilon+\boldsymbol{j}N_{11}\epsilon,\ldots,r_{r}I_{k_{r}}\boldsymbol{i}+M_{rr}\epsilon+\boldsymbol{j}N_{rr}\epsilon).
\end{aligned}
$$
Notice that each $M_{ii}$ is a skew-Hermitian complex matrix and each $N_{ii}$ is a symmetric complex matrix by Lemma \ref{264272}. Thus we can find unitary complex matrices $V_{11},\ldots,V_{rr}$ such that each
$$
\Lambda_{ii}=V_{ii}M_{ii}V_{ii}^{\ast}
$$
has the same block form as the matrix $\Lambda$ in Lemma \ref{264271}. That is,
\begin{equation}\label{264282}
\Lambda_{ii}=\mathrm{diag}(\rho_{i1}I_{l_{i1}}\boldsymbol{i},\ldots,\rho_{id_{i}}I_{l_{id_{i}}}\boldsymbol{i}),
\end{equation}
where $\rho_{i1}<\cdots<\rho_{id_{i}},\ l_{i1}+\cdots+l_{id_{i}}=k_{i}$. For each $i$ such that $r_i>0$, set
\[
(R_{i1},\ldots,R_{ik_i})
=
\bigl(
\underbrace{\rho_{i1},\ldots,\rho_{i1}}_{l_{i1}},
\ldots,
\underbrace{\rho_{id_i},\ldots,\rho_{id_i}}_{l_{id_i}}
\bigr).
\]
Denote $V=\mathrm{diag}(V_{11},\ldots,V_{rr})$. Then $V$ is also a unitary complex matrix in $\mathbb{C}^{n\times n}$ and
$$
A_{3}=VA_{2}V^{\ast}=\mathrm{diag}(r_{1}I_{k_{1}}\boldsymbol{i}+\Lambda_{11}\epsilon+\boldsymbol{j}E^{11}\epsilon,\ldots,r_{r}I_{k_{r}}\boldsymbol{i}+\Lambda_{rr}\epsilon+\boldsymbol{j}E^{rr}\epsilon)
$$
is also skew-Hermitian. Therefore, each $E^{ii}$ is symmetric complex. We partition the matrix $E^{ii}$ according to the same block form as matrix $\Lambda_{ii}$ in (\ref{264282}). That is,

\begin{equation}\label{tt}
E^{ii}=\begin{pmatrix}
 E^{ii}_{11}&            E^{ii}_{12} &  \cdots     & E^{ii}_{1d_{i}}  \\
(E^{ii}_{12})^{T}&         E^{ii}_{22} &  \cdots     & E^{ii}_{2d_{i}}   \\
 \vdots        &         \vdots      &  \ddots     & \vdots        \\
(E^{ii}_{1d_{i}})^{T} &       (E^{ii}_{2d_{i}})^{T} &  \cdots     & E^{ii}_{d_{i}d_{i}}
\end{pmatrix},
\end{equation}
where \(E^{ii}_{st}\in\mathbb C^{l_{is}\times l_{it}}\) for
\(1\leq s,t\leq d_i\).

We now focus on block-diagonalizing $A_{3}$ under unitary equivalence, where the block structure follows that of matrix $\Lambda_{ii}$. One thing to note here is that, during the calculation, we consider the following two cases depending on whether $r_{1}$ is zero or not.
\begin{enumerate}
\item [(1).]$r_{1}\neq 0$. For integers $1\leq h\leq r$, let
$$
Q_{h}=\begin{pmatrix}
I_{l_{h1}} &           (2r_{h}\boldsymbol{i})^{-1}\boldsymbol{j} E^{hh}_{12}\epsilon &  \cdots     & (2r_{h}\boldsymbol{i})^{-1}\boldsymbol{j} E^{hh}_{1d_{h}}\epsilon  \\
-(E^{hh}_{12})^{\ast}\boldsymbol{j}(2r_{h}\boldsymbol{i})^{-1}\epsilon&         I_{l_{h2}} &  \cdots     & (2r_{h}\boldsymbol{i})^{-1}\boldsymbol{j}E^{hh}_{2d_{h}}\epsilon   \\
 \vdots        &         \vdots      &  \ddots     & \vdots        \\
-(E^{hh}_{1d_{h}})^{\ast}\boldsymbol{j}(2r_{h}\boldsymbol{i})^{-1}\epsilon &       -(E^{hh}_{2d_{h}})^{\ast}\boldsymbol{j}(2r_{h}\boldsymbol{i})^{-1}\epsilon &  \cdots &   I_{l_{hd_{h}}}
\end{pmatrix}.
$$
It follows from the form of $Q_h$ and Lemma \ref{99} that each $Q_{h}$ is unitary. Denote
$$
Q=\begin{pmatrix}
 Q_{1}&     &          \\
&         \ddots &         \\
  & &  Q_{r}
\end{pmatrix},
$$
then $Q$ is a unitary matrix in $\mathbb{DQ}^{n\times n}$. And furthermore,
$$
A_{4}=QA_{3}Q^{\ast}=\begin{pmatrix}
 D_{1}&     &          \\
&         \ddots &         \\
  & &  D_{r}
\end{pmatrix},
$$
where for $1\leq p\leq r$, the block matrix
$$
D_{p}=\begin{pmatrix}
r_{p}I_{l_{p1}}\boldsymbol{i}+\rho_{p1}I_{l_{p1}}\boldsymbol{i}\epsilon+\boldsymbol{j}E^{pp}_{11}\epsilon &     &          \\
&         \ddots &         \\
  & &  r_{p}I_{l_{pd_{p}}}\boldsymbol{i}+\rho_{pd_{p}}I_{l_{pd_{p}}}\boldsymbol{i}\epsilon+\boldsymbol{j}E^{pp}_{d_{p}d_{p}}\epsilon
\end{pmatrix}.
$$

Clearly, $A_{4}$ is a block diagonal matrix.

\item [(2).]$r_{1}= 0$. 
 For $2\leq h\leq r$, we repeat the process in the
case $r_1\neq0$. Now, the only block that remains to be handled is
the one containing $r_1$. This block becomes
$\Lambda_{11}\epsilon+\boldsymbol{j}E^{11}\epsilon$.
Notice that $\Lambda_{11}+\boldsymbol{j}E^{11}$ is a skew-Hermitian quaternion
matrix. Then, by Lemma 3.2, there exists a unitary quaternion matrix
$Q_1\in\mathbb{Q}^{k_1\times k_1}$ such that
\[
Q_1(\Lambda_{11}+jE^{11})Q_1^*
   =\operatorname{diag}(y_1\boldsymbol{i},\ldots,y_{k_1}\boldsymbol{i}),
\qquad
0\leq y_1\leq\cdots\leq y_{k_1}.
\]
Together with the matrices $Q_h$, $2\leq h\leq r$, constructed as in
case (1), let
\[
Q=\operatorname{diag}(Q_1,Q_2,\ldots,Q_r).
\]
Then $A_4=QA_3Q^*$ is block diagonal, and its first diagonal block is
\[
D_1=\operatorname{diag}
(y_1\boldsymbol{i}\epsilon,\ldots,y_{k_1}\boldsymbol{i}\epsilon).
\]
In this case, set
\[
R_{1\ell}=y_\ell,\qquad \ell=1,\ldots,k_1.
\]
\end{enumerate}
Therefore, in either case, we have successfully transformed $A_{3}$ into the block-diagonal matrix $A_{4}$. Now for $A_{4}$, since each $E^{ii}_{qq}$ is symmetric complex, we may find a unitary complex matrix $Z^{i}_{q}\in\mathbb{C}^{l_{iq}\times l_{iq}}$ such that each
$$
\Theta^{i}_{q}=(Z^{i}_{q})^{T}E^{ii}_{qq}Z^{i}_{q}
$$
is nonnegative diagonal by Lemma \ref{264281}.  Here $1\leq i\leq r$ when
$r_1\neq0$, whereas $2\leq i\leq r$ when $r_1=0$.
If $r_1\neq0$, let
\[
Z=\operatorname{diag}
(Z_1^1,\ldots,Z_{d_1}^1,\ldots,
 Z_1^r,\ldots,Z_{d_r}^r).
\]
If $r_1=0$, let
\[
Z=\operatorname{diag}
(I_{k_1},Z_1^2,\ldots,Z_{d_2}^2,\ldots,
 Z_1^r,\ldots,Z_{d_r}^r).
\]
If $r=1$ and $r_1=0$, we simply take $Z=I_{k_1}$. Then $Z$ is a unitary matrix in $\mathbb{C}^{n\times n}$. It then follows from Lemma \ref{12} that,
$$
A_{5}=Z^{\ast}A_{4}Z=\begin{pmatrix}
 G_{1}&     &          \\
&         \ddots &         \\
  & &  G_{r}
\end{pmatrix}.
$$
If $r_1=0$, the first block is already diagonal:
\[
G_1=D_1
=\operatorname{diag}
(y_1\boldsymbol{i}\epsilon,\ldots,y_{k_1}\boldsymbol{i}\epsilon).
\]
All the remaining blocks, as well as all the blocks in the case
$r_1\neq0$, have the form
\begin{equation}
G_{t}=\begin{pmatrix}
r_{t}I_{l_{t1}}\boldsymbol{i}+\rho_{t1}I_{l_{t1}}\boldsymbol{i}\epsilon+\boldsymbol{j}\Theta^{t}_{1}\epsilon &  &                 \\
     &  \ddots      &              \\
   &  &  &r_{t}I_{l_{td_{t}}}\boldsymbol{i}+\rho_{td_{t}}I_{l_{td_{t}}}\boldsymbol{i}\epsilon+\boldsymbol{j}\Theta^{t}_{d_{t}}\epsilon
\end{pmatrix}.
 \end{equation}
Here $2\leq t\leq r$ if $r_1=0$, whereas $1\leq t\leq r$ if $r_1\neq0$.

If we set $X=Z^{\ast}QVPU$, then $X$ is a unitary matrix in $\mathbb{DQ}^{n\times n}$ and $A_{5}=XAX^{\ast}$.

So far, we have completed the task of diagonalizing the original matrix $A$ under unitary equivalence. To make the diagonal entries of $A_{5}$ look more orderly, we now proceed with the following discussion. By Lemma \ref{2}, each diagonal element of matrix $A_{5}$ can be unitarily equivalent to a dual complex number. If we place in order along the diagonal all those unitary elements that convert the diagonal entries of $A_{5}$ into dual complex numbers, the resulting matrix is unitary, we denote it by $Y$. Then furthermore,
$$
\Sigma=YA_{5}Y^{\ast}=YXAX^{\ast}Y^{\ast}.
$$
Let $W=YX$. Then $\Sigma=WAW^{\ast}$. Therefore, under unitary equivalence, the matrix $A$ is finally transformed into the desired form.

By Lemma \ref{2}, when $r_i>0$, each diagonal entry
\[
r_i \boldsymbol{i}+\rho_{iq}\boldsymbol{i}\epsilon+\boldsymbol{j}\theta\epsilon
\]
is unitarily equivalent to
\[
r_i \boldsymbol{i}+\rho_{iq}\boldsymbol{i}\epsilon.
\]
When $r_1=0$, the first block is already
\[
\operatorname{diag}
(R_{11}\boldsymbol{i}\epsilon,\ldots,R_{1k_1}\boldsymbol{i}\epsilon).
\]
Hence $\Sigma$ has exactly the form in (\ref{9}), with
\[
R_{i1}\leq\cdots\leq R_{ik_i}.
\]

Let $\sigma_m$ be the $m$-th diagonal entry of $\Sigma$, and let
$e_m$ be the $m$-th standard basis vector. Since
\[
\Sigma e_m=e_m\sigma_m
\]
and $\Sigma=WAW^*$, we have
\[
A(W^*e_m)=W^*e_m\sigma_m.
\]
Moreover, $W^*e_m$ is appreciable because $W$ is unitary. Therefore,
every diagonal entry $\sigma_m$ of $\Sigma$ is a right eigenvalue of
$A$.

\end{proof}

\section{The Autonne--Takagi factorization of $\phi$-Hermitian dual quaternion matrices}
In this section, we propose the diagonalization of $\phi$-Hermitian dual quaternion matrices for $\phi\in\mathbb{S}^{3}$ under unitary $\phi$-congruence, which is regarded as a generalization of the Autonne--Takagi factorization of symmetric complex matrix and $\phi$-Hermitian quaternion matrix for $\phi\in\mathbb{S}^{3}$. We first introduce some definitions and their related properties that will be used later.

Assume $\phi\in\mathbb{Q}$, then $\phi$ can be expressed in the form $\phi_{0}+\phi_{1}\boldsymbol{i}+\phi_{2}\boldsymbol{j}+\phi_{3}\boldsymbol{k}$.
Recall that
$$
\mathbb{S}^{3}=\{ \phi\in\mathbb{Q}; |\phi|=1\}, \ \mathbb{S}=\{ \phi\in\mathbb{Q}; |\phi|=1, \phi_{0}=0\}.
$$
Then $\mathbb{S}^{3}\setminus \mathbb{S}=\{ \phi\in \mathbb{S}^{3}; \phi_{0}\neq 0\}$. For convenience, we denote
$\mathbb{S}^{\prime}=\mathbb{S}^{3}\setminus (\mathbb{S}\cup\{\pm 1\})$.

\begin{definition}
Assume $\phi\in\mathbb{S}^{3}$ and $A\in\mathbb{DQ}^{n\times n}$. Define the $\phi$-conjugation of $A$ as $\overline{A}^{\phi}=\overline{\phi} A\phi$, and the $\phi$-transpose of $A$ as $A^{*{\phi}}=\overline{A^*}^{\phi}$.
\end{definition}
Suppose that $A=A_{st}+A_{\mathcal{I}}\epsilon\in\mathbb{DQ}^{n\times n}$, then $\overline{A}^{\phi}=\overline{\phi} A_{st}\phi+\overline{\phi} A_{\mathcal{I}}\phi\epsilon=\overline{A_{st}}^{\phi}+\overline{A_{\mathcal{I}}}^{\phi}\epsilon$ and
$A^{*{\phi}}=A_{st}^{*{\phi}}+A_{\mathcal{I}}^{*{\phi}}\epsilon$. The $\phi$-conjugation has the following properties.

\begin{lemma}\label{14}
Assume $\phi\in \mathbb{S}^{3}$,  $A, B\in \mathbb{DQ}^{n\times n}$. Then the following statements hold.
\begin{enumerate}

    \item  $(\overline{A}^{\phi})^* = \overline{A^*}^{\phi}=A^{*{\phi}}$, $(A^{*{\phi}})^{*}=(A^*)^{*{\phi}}$.

    \item If $A$ is invertible, then $(\overline{A}^{\phi})^{-1}=\overline{A^{-1}}^{\phi}$, $(A^{*{\phi}})^{-1}=(A^{-1})^{*{\phi}}$.

    \item $\overline{A+B}^{\phi}=\overline{A}^{\phi}+\overline{B}^{\phi}$, $(A+B)^{*{\phi}}=A^{*{\phi}}+B^{*{\phi}}$.
    \item $\overline{AB}^{\phi}=\overline{A}^{\phi}\cdot\overline{B}^{\phi}$,  $(AB)^{*{\phi}}= B^{*{\phi}}A^{*{\phi}}$.

\item $\overline{A^T}^{\phi}=(\overline{A}^{\phi})^T=\overline{A^{*{\phi}}}=(\overline{A})^{*{\phi}}$.

\item $(A^{*{\phi}})^{T}=(A^{T})^{*{\phi}}=\overline{\overline{A}^{\phi}}=\overline{\overline{A}}^{\phi}  $.
\end{enumerate}

In particular, if $\phi\in \mathbb{S}$ and $A, B\in \mathbb{DQ}^{n\times n}$, then the following equations also hold,
$$
\overline{\overline{A}^{\phi}}^{\phi}=A=(A^{*{\phi}})^{*{\phi}}, \  \overline{A^{*{\phi}}}^{\phi}=(\overline{A}^{\phi})^{*{\phi}}=A^{*}.
$$
\end{lemma}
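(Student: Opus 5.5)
We will prove the six items and the final identities directly from the definitions $\overline{A}^{\phi}=\overline{\phi}A\phi$ and $A^{*\phi}=\overline{A^{*}}^{\phi}=\overline{\phi}A^{*}\phi$, using only the unit relation $\phi\overline{\phi}=\overline{\phi}\phi=1$. Here $\phi$ is treated as the scalar matrix $\phi I_n$. It commutes with $\epsilon$ but not with $A$. Because every definition splits into standard and infinitesimal parts, it suffices to argue with dual quaternion matrices as a ring containing $\phi I_n$. We use three standard facts: $(XY)^{*}=Y^{*}X^{*}$ for dual quaternion matrices, $(\phi I)^{*}=\overline{\phi}I$, and, for the transpose, $(XY)^{T}$ is \emph{not} $Y^TX^T$ in general. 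The transpose items therefore need separate care.

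For (1), we compute $(\overline{\phi}A\phi)^{*}=\overline{\phi}A^{*}\phi$, which equals $\overline{A^{*}}^{\phi}=A^{*\phi}$. The second identity in (1) holds because $(A^{*\phi})^{*}=(\overline{\phi}A^{*}\phi)^{*}=\overline{\phi}A\phi$, and this is also the definition of $(A^{*})^{*\phi}$. For (3), additivity of $X\mapsto\overline{\phi}X\phi$ gives the result at once. For (4), $\overline{\phi}A\phi\,\overline{\phi}B\phi=\overline{\phi}AB\phi$, and $(AB)^{*\phi}=\overline{\phi}B^{*}A^{*}\phi=(\overline{\phi}B^{*}\phi)(\overline{\phi}A^{*}\phi)$. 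Item (2) then follows from (4) applied to $AA^{-1}=A^{-1}A=I$, together with $\overline{I}^{\phi}=I$ and $I^{*\phi}=I$.

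Items (5) and (6) mix the entrywise maps $\overline{\,\cdot\,}$ and $T$ with multiplication by $\phi$. For these we work entrywise: the $(s,t)$ entry of $\overline{\phi}A\phi$ is $\overline{\phi}a_{st}\phi$. Transposition only permutes entries, so it commutes with $X\mapsto\overline{\phi}X\phi$. This gives $\overline{A^{T}}^{\phi}=(\overline{A}^{\phi})^{T}$, and likewise $(A^{*\phi})^{T}=(A^{T})^{*\phi}$. Quaternion conjugation is an anti-automorphism, so $\overline{\overline{\phi}a\phi}=\overline{\phi}\,\overline{a}\,\phi$. Hence entrywise conjugation also commutes with the $\phi$-conjugation, giving $\overline{A^{*\phi}}=(\overline{A})^{*\phi}$ and $\overline{\overline{A}^{\phi}}=\overline{\overline{A}}^{\phi}$. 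The remaining equalities in (5) and (6) follow by combining these commutations with $A^{*}=\overline{A}^{T}$. For dual entries we apply this to the standard and infinitesimal parts separately.

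The step needing real care is the final ``in particular'' claim, which uses $\phi\in\mathbb{S}$. Here $\phi^{2}=-|\phi|^{2}=-1$, so $\overline{\phi}=-\phi=\phi^{-1}$ and $\overline{\phi}^{2}=-1$. Then $\overline{\overline{A}^{\phi}}^{\phi}=\overline{\phi}^{2}A\phi^{2}=(-1)A(-1)=A$, and the same computation gives $(A^{*\phi})^{*\phi}=A$ after using (1). Finally, $\overline{A^{*\phi}}^{\phi}=\overline{\phi}^{2}A^{*}\phi^{2}=A^{*}$. Similarly, $(\overline{A}^{\phi})^{*\phi}=\overline{\phi}(\overline{\phi}A\phi)^{*}\phi=\overline{\phi}^{2}A^{*}\phi^{2}=A^{*}$. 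The main obstacle is simply to avoid implicitly commuting $\phi$ past $A$ or treating the transpose as multiplicative. Keeping all transpose and entrywise-conjugation manipulations entrywise, as above, handles this.
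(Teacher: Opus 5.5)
Your proposal is correct. Every identity follows from $\overline{A}^{\phi}=\overline{\phi}A\phi$, $\phi\overline{\phi}=1$, $(XY)^{*}=Y^{*}X^{*}$ and the entrywise rule $\overline{\overline{\phi}a\phi}=\overline{\phi}\,\overline{a}\,\phi$: all four expressions in (5) reduce to $\overline{\phi}A^{T}\phi$, all four in (6) reduce to $\overline{\phi}\,\overline{A}\,\phi$, and for $\phi\in\mathbb{S}$ the final claims reduce to $\overline{\phi}^{2}X\phi^{2}=X$. The paper states this lemma without any proof, treating it as exactly this kind of routine verification from the definitions, so your direct computation supplies what is implicitly intended.
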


\begin{definition}
Assume $\phi\in\mathbb{S}^{3}$ and $A\in\mathbb{DQ}^{n\times n}$. If ${\overline{A}^{\phi}}=A^{\ast}$, then we say $A$ is $\phi$-Hermitian.
\end{definition}

Notice that for $A=A_{st}+A_{\mathcal{I}}\epsilon$, $A=A^{*{\phi}}$ is equivalent to the two equations $A_{st}=A_{st}^{*{\phi}}$ and $A_{\mathcal{I}}=A_{\mathcal{I}}^{*{\phi}}$ holding simultaneously. That is, $A$ is $\phi$-Hermitian if and only if both $A_{st}$ and $A_{\mathcal{I}}$ are $\phi$-Hermitian.
\begin{remark}\label{hh}
Generally speaking, when $\phi\in \mathbb{S}^{\prime}$, the equation $\overline{\overline{A}^{\phi}}^{\phi}=A$ does not hold for most $A\in \mathbb{DQ}^{n\times n}$. Fortunately,  the equation $A=A^{*{\phi}}(A=-A^{*{\phi}})$ is always equivalent to the equation $\overline{A}^{\phi}=A^{*}(\overline{A}^{\phi}=-A^{*})$. A similar situation also occurs in the preprint \cite{YHX}.
\end{remark}
\begin{definition}
Assume $\phi\in\mathbb{S}^{3}$ and $A,B\in\mathbb{DQ}^{n\times n}$. If there exists a unitary matrix $U\in\mathbb{DQ}^{n\times n}$ such that $UAU^{*{\phi}}=B$, then we say $A$ is unitarily $\phi$-congruent to $B$ and denote by $A\sim_{\phi-u} B$.
\end{definition}
It can be verified that unitary $\phi$-congruence is an equivalence relation. Moreover,
\begin{enumerate}
\item  if $\phi\in\mathbb{S}$, then $A\sim_{\phi-u} B \Leftrightarrow \phi A \sim_{u} \phi B \Leftrightarrow A\overline{\phi} \sim_{u}           B\overline{\phi}$,
\item  if $\phi\in\mathbb{S}^{\prime}$, then $A\sim_{\phi-u} B \Leftrightarrow A\overline{\phi} \sim_{u} B\overline{\phi}$.
\end{enumerate}

In what follows, we discuss the diagonalization of $\phi$-Hermitian dual quaternion matrices under unitary $\phi$-congruence by dividing the discussion into three cases according to the range of $\phi$. That is, $\phi=\pm 1$, $\phi\in\mathbb{S}$ and $\phi\in\mathbb{S}^{\prime}$. 

\begin{lemma}[\cite{QL}]\label{16}
Suppose that $A=A_{st}+A_{\mathcal{I}}\epsilon\in\mathbb{DQ}^{n\times n}$ is Hermitian. Then there exists a unitary matrix $V\in\mathbb{DQ}^{n\times n}$ and a diagonal matrix $\Sigma\in\mathbb{DQ}^{n\times n}$ such that
$$
\Sigma=V^{\ast}AV,
$$
where
\begin{equation}\label{17}
\Sigma=\mathrm{diag}(\lambda_{1}+\lambda_{11}\epsilon,\ldots,\lambda_{1}+\lambda_{1k_{1}}\epsilon,\lambda_{2}+\lambda_{21}\epsilon,\ldots,\lambda_{r}+\lambda_{rk_{r}}\epsilon)
\end{equation}
with the diagonal entries of $\Sigma$ being $n$ eigenvalues of $A$. $\lambda_{i}$ is a $k_{i}$-multiple right eigenvalue of $A_{st}$, and
$$
\lambda_{1}<\cdots<\lambda_{r},\ \  \lambda_{i1}\leq\cdots\leq\lambda_{ik_{i}},\ \ \lambda_{ij}\in\mathbb{R},\ \ \sum_{i=1}^{r}k_{i}=n.
$$ 
\end{lemma}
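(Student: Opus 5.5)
The plan mirrors the proof of Theorem \ref{4}, but is simpler because every real-scalar block commutes with quaternion matrices. The transformation here is $V^{\ast}AV$, so every unitary is applied on the right.

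\textbf{Step 1: diagonalize the standard part.} Write $A=A_{st}+A_{\mathcal{I}}\epsilon$. Both $A_{st}$ and $A_{\mathcal{I}}$ are Hermitian quaternion matrices. By the spectral theorem for Hermitian quaternion matrices there is a unitary $U\in\mathbb{Q}^{n\times n}$ with
\[
U^{\ast}A_{st}U=\mathrm{diag}(\lambda_{1}I_{k_{1}},\ldots,\lambda_{r}I_{k_{r}}),\qquad \lambda_{1}<\cdots<\lambda_{r}\ \text{real}.
\]
Then
\[
U^{\ast}AU=\mathrm{diag}(\lambda_{i}I_{k_{i}})+C\epsilon,\qquad C=U^{\ast}A_{\mathcal{I}}U \text{ Hermitian},
\]
and we partition $C$ into blocks $C_{st}$ conformal with the $k_i$.

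\textbf{Step 2: kill the off-diagonal blocks.} Define $P=I_n+S\epsilon$ with $S$ skew-Hermitian. Its blocks are $S_{ss}=0$ and, for $s\neq t$,
\[
S_{st}=\frac{C_{st}}{\lambda_{s}-\lambda_{t}},
\]
up to a sign fixed by the order of multiplication. Since the $\lambda$'s are distinct reals, these denominators are nonzero real scalars that commute with everything. This avoids the $r_s\boldsymbol{i}\pm r_t\boldsymbol{i}$ splitting needed in Theorem \ref{4}. By Lemma \ref{99}, $P$ is unitary. Expanding
\[
P^{\ast}(D+C\epsilon)P=D+(C+DS-SD)\epsilon
\]
(using $\epsilon^2=0$), the $(s,t)$ block of the $\epsilon$-part is $C_{st}+(\lambda_s-\lambda_t)S_{st}$. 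With the correct sign this vanishes. The result is block diagonal with blocks $\lambda_iI_{k_i}+C_{ii}\epsilon$.

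\textbf{Step 3: diagonalize each diagonal block.} Each $C_{ii}$ is Hermitian, so choose a unitary $V_i\in\mathbb{Q}^{k_i\times k_i}$ with
\[
V_i^{\ast}C_{ii}V_i=\mathrm{diag}(\lambda_{i1},\ldots,\lambda_{ik_i}),\qquad \lambda_{i1}\leq\cdots\leq\lambda_{ik_i}.
\]
Because $\lambda_iI_{k_i}$ is a real scalar block, it is unchanged by this conjugation. Setting $V=UP\,\mathrm{diag}(V_1,\ldots,V_r)$ gives a product of unitaries, hence unitary, and $V^{\ast}AV=\Sigma$ has the form (\ref{17}).

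\textbf{Step 4: eigenvalue claims.} Exactly as at the end of Theorem \ref{4}, $AVe_m=Ve_m\sigma_m$, and $Ve_m$ is appreciable, so each diagonal entry is a right eigenvalue. Each $\lambda_i$ is a $k_i$-multiple eigenvalue of $A_{st}$ by construction.

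The only delicate point is the sign and ordering convention in Step 2 for the $\epsilon$-part commutator. Here that is routine, since no quaternion noncommutativity enters.
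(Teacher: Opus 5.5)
Your proposal is correct and follows essentially the same route as the cited proof in \cite{QL}, which is also the pattern of the paper's proof of Theorem \ref{4}: diagonalize $A_{st}$, remove the off-diagonal infinitesimal blocks with a unitary $I_n+S\epsilon$ ($S$ skew-Hermitian), then diagonalize the Hermitian diagonal blocks $C_{ii}$. The sign you left open is $S_{st}=C_{st}/(\lambda_{t}-\lambda_{s})$ for $s\neq t$; this choice makes $S$ skew-Hermitian and makes the block $C_{st}+(\lambda_s-\lambda_t)S_{st}$ vanish, whereas the formula you displayed would double $C_{st}$ instead.
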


It can be verified that if $\phi=\pm 1$, then the $\phi$-conjugation $\overline{A}^{\phi}$ we have defined is equal to $A$, the $\phi$-Hermitian dual quaternion matrix is exactly Hermitian dual quaternion matrix and unitary $\phi$-congruence is exactly unitary equivalence. Therefore, for $\phi=\pm 1$, we have the following conclusion by Lemma \ref{16}. 
\begin{proposition}\label{4.1}
Assume $\phi=\pm 1$ and $A\in\mathbb{DQ}^{n\times n}$. If $A$ is $\phi$-Hermitian, then there exists a unitary matrix $V\in\mathbb{DQ}^{n\times n}$ and a diagonal matrix $\Sigma\in\mathbb{DQ}^{n\times n}$ such that
$$
\Sigma=VAV^{\ast\phi},
$$
where
\begin{equation}\label{177}
\Sigma=\mathrm{diag}(\lambda_{1}+\lambda_{11}\epsilon,\ldots,\lambda_{1}+\lambda_{1k_{1}}\epsilon,\lambda_{2}+\lambda_{21}\epsilon,\ldots,\lambda_{r}+\lambda_{rk_{r}}\epsilon),
\end{equation}
$$
\lambda_{1}<\cdots<\lambda_{r},\ \  \lambda_{i1}\leq\cdots\leq\lambda_{ik_{i}},\ \ \lambda_{ij}\in\mathbb{R},\ \ \sum_{i=1}^{r}k_{i}=n.
$$ 
\end{proposition}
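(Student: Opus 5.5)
The plan is to reduce Proposition \ref{4.1} directly to Lemma \ref{16}, using that $\phi=\pm 1$ is real and therefore central in $\mathbb{DQ}$.

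\textbf{Step 1: the $\phi$-operations are trivial.} For $\phi=\pm1$ we have $\overline{\phi}=\phi$ and $\phi^{2}=1$. So for every $B\in\mathbb{DQ}^{n\times n}$,
$$
\overline{B}^{\phi}=\overline{\phi}B\phi=\phi^{2}B=B .
$$
Consequently $B^{\ast\phi}=\overline{B^{\ast}}^{\phi}=B^{\ast}$.

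\textbf{Step 2: translate the hypothesis.} The condition that $A$ is $\phi$-Hermitian, $\overline{A}^{\phi}=A^{\ast}$, now reads $A=A^{\ast}$. Thus $A$ is an ordinary Hermitian dual quaternion matrix.

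\textbf{Step 3: apply Lemma \ref{16}.} Lemma \ref{16} gives a unitary $V_{0}\in\mathbb{DQ}^{n\times n}$ with $\Sigma=V_{0}^{\ast}AV_{0}$. Here $\Sigma$ has exactly the form (\ref{17}) and satisfies the stated ordering conditions; this is the form (\ref{177}) required in the proposition.

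\textbf{Step 4: set $V=V_{0}^{\ast}$.} Since $V_{0}$ is unitary, $V$ is also unitary, and $V^{\ast}=V_{0}$. By Step 1, $V^{\ast\phi}=V^{\ast}=V_{0}$. Hence
$$
VAV^{\ast\phi}=V_{0}^{\ast}AV_{0}=\Sigma .
$$

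There is no genuine obstacle. The only point needing care is the bookkeeping of the convention: Lemma \ref{16} is stated as $V^{\ast}AV$, while the proposition is stated as $VAV^{\ast\phi}$. Replacing $V_{0}$ by its adjoint handles this.
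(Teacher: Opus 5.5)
Your proposal is correct and follows essentially the same route as the paper: for $\phi=\pm1$ the $\phi$-conjugation is the identity, so $\phi$-Hermitian means Hermitian, $V^{\ast\phi}=V^{\ast}$, and the result follows from Lemma \ref{16}. Your explicit choice $V=V_{0}^{\ast}$, which reconciles the convention $V_{0}^{\ast}AV_{0}$ of Lemma \ref{16} with $VAV^{\ast\phi}$, is a bookkeeping step the paper leaves implicit.
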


The conclusion of the following lemma can be seen in the proof of Theorem $3$ in \cite{HZ}.
\begin{lemma}\label{3}
Assume $\phi\in\mathbb{S}$ and $A\in\mathbb{DQ}^{n\times n}$. Then $A$ is $\phi$-Hermitian if and only if $\phi A$ is skew-Hermitian.
\end{lemma}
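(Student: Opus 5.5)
The plan is a direct computation from the definitions, carried out componentwise. The statement involves only $\phi$ and $A$, and $\phi$ is a real-free unit pure imaginary quaternion, so $\overline{\phi}=-\phi$ and $\phi^{2}=-1$. I also use that $\phi$, being a scalar quaternion with no $\epsilon$ part, commutes with $\epsilon$. Hence for $A=A_{st}+A_{\mathcal{I}}\epsilon$ we have $\phi A=\phi A_{st}+\phi A_{\mathcal{I}}\epsilon$ and $(\phi A)^{\ast}=A_{st}^{\ast}\overline{\phi}+A_{\mathcal{I}}^{\ast}\overline{\phi}\epsilon$. Therefore both sides of the equivalence split into a standard-part and an infinitesimal-part condition. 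It suffices to prove the statement for a quaternion matrix $B\in\mathbb{Q}^{n\times n}$ and then apply it to $B=A_{st}$ and $B=A_{\mathcal{I}}$. This uses the remark after the definition of $\phi$-Hermitian, that $A$ is $\phi$-Hermitian if and only if both $A_{st}$ and $A_{\mathcal{I}}$ are.

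For a quaternion matrix $B$, the condition that $B$ is $\phi$-Hermitian reads $\overline{\phi}B\phi=B^{\ast}$. The condition that $\phi B$ is skew-Hermitian reads $\phi B=-(\phi B)^{\ast}=-B^{\ast}\overline{\phi}=B^{\ast}\phi$.

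For the forward direction, multiply $\overline{\phi}B\phi=B^{\ast}$ on the right by $\overline{\phi}$ and use $\phi\overline{\phi}=1$. This gives $\overline{\phi}B=B^{\ast}\overline{\phi}$, i.e. $-\phi B=-B^{\ast}\phi$, which is $\phi B=B^{\ast}\phi$.

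For the converse, start from $\phi B=B^{\ast}\phi$. Using $\overline{\phi}=-\phi$ again gives $\overline{\phi}B=B^{\ast}\overline{\phi}$. Right-multiplying by $\phi$ then yields $\overline{\phi}B\phi=B^{\ast}$.

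All steps are reversible, since $\phi$ is invertible with $\phi^{-1}=\overline{\phi}$.

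There is essentially no obstacle here. The only point needing care is the noncommutativity: one must keep the left/right placement of $\phi$ and $\overline{\phi}$ consistent. One must also apply $(\phi B)^{\ast}=B^{\ast}\overline{\phi}$, with the order reversed under conjugate transpose, rather than commuting $\phi$ past the entries of $B$. The pure-imaginary hypothesis $\phi\in\mathbb{S}$ is used exactly once in each direction, through $\overline{\phi}=-\phi$. This is why the statement fails for $\phi\in\mathbb{S}^{\prime}$.
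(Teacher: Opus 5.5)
Your proof is correct. It is the same direct computation (using $\overline{\phi}=-\phi$ and $\phi\overline{\phi}=1$) that the paper does not write out but delegates to the proof of Theorem~3 in \cite{HZ}. Your reduction to the standard and infinitesimal parts is harmless but unnecessary: since $\phi$ commutes with $\epsilon$, the identical manipulation works directly for $A\in\mathbb{DQ}^{n\times n}$.
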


Based on Theorem \ref{4} and Lemma \ref{3}, we obtain the diagonalization of $\phi$-Hermitian dual quaternion matrices under unitary $\phi$-congruence for $\phi\in\mathbb{S}$.
\begin{proposition}\label{5}
Assume $\phi\in\mathbb{S}$ and $B\in\mathbb{DQ}^{n\times n}$ is $\phi$-Hermitian. Then there exists a unitary matrix $V\in\mathbb{DQ}^{n\times n}$ and a diagonal matrix $\Gamma$ such that
\begin{equation}\label{13}
VBV^{*{\phi}}=\Gamma=\mathrm{diag}(r_{1}+R_{11}\epsilon,\ldots,r_{1}+R_{1m_{1}}\epsilon,r_{2}+R_{21}\epsilon,\ldots,r_{r}+R_{rm_{r}}\epsilon),
\end{equation}
where 
$$
0\leq r_{1}<\cdots<r_{r},\ \ R_{i1}\leq\cdots\leq R_{im_{i}},\ \ R_{ij}\in\mathbb{R},\ \ \sum_{i=1}^{r}m_{i}=n.
$$
\end{proposition}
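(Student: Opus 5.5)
The plan is to reduce Proposition \ref{5} to Theorem \ref{4} via Lemma \ref{3}, and then absorb the factor $\phi$ by a constant diagonal unitary rescaling. Throughout I use that for $\phi\in\mathbb{S}$ we have $\overline{\phi}=-\phi=\phi^{-1}$ and $\phi^{2}=-1$.

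\emph{Step 1: rewrite the target.} By definition $V^{*\phi}=\overline{\phi}V^{*}\phi$, so
$$VBV^{*\phi}=V(B\overline{\phi})V^{*}\phi .$$
The matrix to diagonalize by unitary equivalence is therefore $B\overline{\phi}$. It is skew-Hermitian. Indeed, $\phi$-Hermitian means $B^{*}=\overline{\phi}B\phi$, hence
$$(B\overline{\phi})^{*}=\phi B^{*}=\phi\overline{\phi}B\phi=B\phi=-B\overline{\phi}.$$
Equivalently, one can note $B\overline{\phi}=\overline{\phi}(\phi B)\overline{\phi}^{\,*}$ and invoke Lemma \ref{3}, since $\phi B$ is skew-Hermitian and conjugation by the unit quaternion $\overline{\phi}$ preserves this.

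\emph{Step 2: apply Theorem \ref{4}.} This gives a unitary $W\in\mathbb{DQ}^{n\times n}$ with
$$W(B\overline{\phi})W^{*}=\Sigma=\mathrm{diag}(\sigma_{1}\boldsymbol{i},\ldots,\sigma_{n}\boldsymbol{i}),$$
where each $\sigma_{m}=r_{i}+R_{ij}\epsilon$ is a dual number. The $\sigma_{m}$ are ordered exactly as required in (\ref{13}): $0\leq r_{1}<\cdots<r_{r}$ and $R_{i1}\leq\cdots\leq R_{im_{i}}$. Consequently $WBW^{*\phi}=\Sigma\phi=\mathrm{diag}(\sigma_{m}\boldsymbol{i}\phi)$. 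This is diagonal, but its entries are not yet real dual numbers.

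\emph{Step 3: fix the quaternion phase.} Both $\boldsymbol{i}$ and $\overline{\phi}$ are unit pure imaginary quaternions, so they have equal modulus and zero real part. By Lemma \ref{1} they are similar, and normalizing as in the proof of Lemma \ref{2} gives a unit quaternion $d$ with $d\boldsymbol{i}\overline{d}=\overline{\phi}$. Set $D=dI_{n}$, which is a unitary quaternion matrix, and put $V=DW$, which is unitary. Since dual numbers commute with quaternions,
$$VBV^{*\phi}=D\Sigma D^{*}\phi=\mathrm{diag}(\sigma_{m}\,d\boldsymbol{i}\overline{d}\,\phi)=\mathrm{diag}(\sigma_{m}\overline{\phi}\phi)=\mathrm{diag}(\sigma_{1},\ldots,\sigma_{n}).$$
This is exactly $\Gamma$ in (\ref{13}), with $m_{i}=k_{i}$, and the ordering is inherited from Theorem \ref{4}.

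\emph{Main obstacle.} The only delicate point is Step 1: the correct object to feed into Theorem \ref{4} is $B\overline{\phi}$, not $\phi B$. This matters because $\phi$ does not commute with $V$, so one must check the skew-Hermitian property of $B\overline{\phi}$ directly rather than just quoting Lemma \ref{3}. Once that is done, Step 3 is easy precisely because the diagonal rescaling is by a single constant quaternion $d$, and the $\sigma_{m}$ are dual numbers that commute with $d$.
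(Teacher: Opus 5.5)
Your proof is correct and is essentially the paper's argument: both reduce to Theorem \ref{4} by multiplying $B$ by a unit quaternion to obtain a skew-Hermitian matrix, then rotate $\boldsymbol{i}$ with a constant unit quaternion. The only difference is bookkeeping: the paper feeds $\phi B$ (via Lemma \ref{3}) into Theorem \ref{4} and handles the non-commutativity by taking $V=\overline{\phi}(YX)\phi$, whereas your right multiplication $B\overline{\phi}$ lets you take $V=DW$ directly, so the ``obstacle'' you flag is a convenience, not a necessity.

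One harmless slip: in fact $\overline{\phi}(\phi B)\overline{\phi}^{\,*}=B\phi=-B\overline{\phi}$, not $B\overline{\phi}$; this does not matter, since skew-Hermitian matrices are closed under negation and your direct check $(B\overline{\phi})^{*}=-B\overline{\phi}$ is correct.
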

\begin{proof}
 Suppose that $B$ is $\phi$-Hermitian. This is equivalent to the fact that $\phi B$ is skew-Hermitian by Lemma \ref{3}. Then we can find a unitary matrix $X\in\mathbb{DQ}^{n\times n}$ such that
$$
X(\phi B)X^{\ast}=\Sigma=\mathrm{diag}(r_{1}\boldsymbol{i}+R_{11}\boldsymbol{i}\epsilon,\ldots,r_{1}\boldsymbol{i}+R_{1m_{1}}\boldsymbol{i}\epsilon,r_{2}\boldsymbol{i}+R_{21}\boldsymbol{i}\epsilon,\ldots,r_{r}\boldsymbol{i}+R_{rm_{r}}\boldsymbol{i}\epsilon),
$$
where 
$$
0\leq r_{1}<\cdots<r_{r},\ \ R_{i1}\leq\cdots\leq R_{im_{i}},\ \ R_{ij}\in\mathbb{R},\ \ \sum_{i=1}^{r}m_{i}=n.
$$
By Lemma \ref{1}, $\boldsymbol{i}$ and $\phi$ are unitarily equivalent. Therefore, the diagonal matrix $\Sigma$ can be transformed into
$$
\Xi=\mathrm{diag}(r_{1}\phi+R_{11}\phi\epsilon,\ldots,r_{1}\phi+R_{1m_{1}}\phi\epsilon,r_{2}\phi+R_{21}\phi\epsilon,\ldots,r_{r}\phi+R_{rm_{r}}\phi\epsilon)
$$
through a unitary transformation. We may denote this process by $Y\Sigma Y^{\ast}=\Xi$, where $Y$ is unitary. Now we have
$$
YX(\phi B)X^{\ast}Y^{\ast}=\Xi,
$$
which is equivalent to
$$
\overline{(YX)}^{\phi}B(YX)^{\ast}=\overline{\phi}YX\phi BX^{\ast}Y^{\ast}=\overline{\phi}\Xi.
$$
Denote $\Gamma=\overline{\phi}\Xi$, then $\Gamma$ possesses the desired form in (\ref{13}). By Lemma \ref{14}, we may write $V=\overline{(YX)}^{\phi}$. Then $\overline{V}^{\phi}=YX$ and furthermore $V^{*{\phi}}=(YX)^{\ast}$. Thus, finally
$$
\ \ \ \ \ \ \ \ \ \ \ \ \ \ \ \ \ \ \ \ \ \ \ \ \ \ \ \ \ \ \ \ \ \ \ \ \ \ \ \ \ \ VBV^{*{\phi}}=\Gamma.\ \ \ \ \ \ \ \ \ \ \ \ \ \ \ \ \ \ \ \ \ \ \ \ \ \ \ \ \ \ \ \ \ \ \ \ \ \ \ \ \ \ \qedhere
$$
\end{proof}
As for the case $\phi\in\mathbb{S}^{\prime}$, we also obtain an analogous conclusion for $\phi$-Hermitian dual quaternion matrices. Here we use some facts that have already been proved in the preprint \cite{YHX}. 
\begin{definition}[\cite{YHX}]
Assume $\phi\in \mathbb{Q}\setminus\mathbb{R}$. Let $\phi=\phi_{0}+r_{\phi}I_{\phi}$, where 
$$
  r_{\phi}=|\rm{Im}\phi|>0,\ \  I_{\phi}=\frac{Im\phi}{|Im\phi|}  . 
$$  
  Define the set $\mathbb{C}_{I_{\phi}}$ to be
 $$
 \mathbb{C}_{I_{\phi}}=\{ a+bI_{\phi};a,b\in\mathbb{R}\}.
 $$
\end{definition}
Notice that, for a fixed $\phi\in \mathbb{Q}\setminus\mathbb{R}$, $\mathbb{C}_{I_{\phi}}$ is isomorphic to $\mathbb{C}$. We use the notation $\mathbb{C}_{I_{\phi}}^{n\times n}$ to denote the set of $n\times n$ quaternion matrices whose entries all belong to $\mathbb{C}_{I_{\phi}}$. That is,
$$
\mathbb{C}_{I_{\phi}}^{n\times n}=\{ A=(a_{ij})\in \mathbb{Q}^{n\times n};a_{ij}\in \mathbb{C}_{I_{\phi}} ~ for ~i,j=1,\ldots,n\}.
$$
From the form of $\mathbb{C}_{I_{\phi}}^{n\times n}$, it can be seen that $\mathbb{C}_{I_{\phi}}^{n\times n}\cong \mathbb{C}^{n\times n}$. Moreover, for a fixed matrix $B\in\mathbb{Q}^{n\times n}$, we have
\begin{enumerate}
\item $B\in \mathbb{C}_{I_{\phi}}^{n\times n}$ if and only if $B\in \mathbb{C}_{I_{\overline{\phi}}}^{n\times n}$,
\item $B\in \mathbb{C}_{I_{\phi}}^{n\times n}$ if and only if $B^{\ast}\in \mathbb{C}_{I_{\phi}}^{n\times n}$.
\end{enumerate}

In fact, for $\phi\in \mathbb{S}^{\prime}$, $\mathbb{C}_{I_{\phi}}^{n\times n}$ and $\mathbb{C}_{I_{\phi^{2}}}^{n\times n}$ are exactly the same set. Thus, for the sake of convenience, in what follows we shall predominantly use the symbol $\mathbb{C}_{I_{\phi}}^{n\times n}$ in place of $\mathbb{C}_{I_{\phi^{2}}}^{n\times n}$ when $\phi\in \mathbb{S}^{\prime}$.
\begin{lemma}[\cite{YHX}]\label{15}
Let $\phi\in \mathbb{S}^{\prime}$, $A\in \mathbb{Q}^{n\times n}$. If $A$ is $\phi$-Hermitian, then $A\in \mathbb{C}_{I_{\phi}}^{n\times n}$.
\end{lemma}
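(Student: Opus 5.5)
Write $\phi=\phi_0+r_\phi I_\phi$ with $\phi_0\neq 0$ and $r_\phi>0$; this is possible because $\phi\in\mathbb{S}'$. The $\phi$-Hermitian condition reads $\overline{\phi}A\phi=A^{\ast}$, and I will work with this form throughout. Since $A\in\mathbb{Q}^{n\times n}$ is a fixed quaternion matrix, the plan is to split it along $I_\phi$ and show that the part orthogonal to $\mathbb{C}_{I_\phi}$ must vanish.

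First choose a unit pure imaginary quaternion $J$ with $I_\phi J=-JI_\phi$. Then every quaternion matrix decomposes uniquely as $A=A_1+A_2J$ with $A_1,A_2\in\mathbb{C}_{I_\phi}^{n\times n}$, the analogue of the decomposition $C=M+\boldsymbol{j}N$ used in Section 3 (here the $J$ is placed on the right). The basic commutation facts are:
\begin{itemize}
\item $\phi$ commutes with $\mathbb{C}_{I_\phi}$;
\item $J c=\overline{c}J$ for $c\in\mathbb{C}_{I_\phi}$, in particular $J\phi=\overline{\phi}J$;
\item $\overline{\phi}\phi=1$.
\end{itemize}
Using these,
$$
\overline{\phi}A\phi=A_1+\overline{\phi}A_2J\phi=A_1+\overline{\phi}A_2\overline{\phi}J=A_1+\overline{\phi}^{2}A_2J .
$$

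Next compute the other side of the condition. We have $A^{\ast}=A_1^{\ast}+(A_2J)^{\ast}$ and $(A_2J)^{\ast}=J^{\ast}A_2^{\ast}=-JA_2^{\ast}=-\overline{A_2^{\ast}}J=-A_2^{T}J$, where the transpose and the conjugation are taken inside $\mathbb{C}_{I_\phi}$. Comparing the $\mathbb{C}_{I_\phi}$-components and the $J$-components of $\overline{\phi}A\phi=A^{\ast}$ gives two equations:
$$
A_1=A_1^{\ast},\qquad \overline{\phi}^{2}A_2=-A_2^{T}.
$$

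The main step is to show that the second equation forces $A_2=0$. Transposing it gives $\overline{\phi}^{2}A_2^{T}=-A_2$, because $\overline{\phi}^{2}$ is a scalar in the commutative field $\mathbb{C}_{I_\phi}$. Substituting $A_2^{T}=-\overline{\phi}^{2}A_2$ from the first form then yields $\overline{\phi}^{4}A_2=A_2$. Hence $A_2=0$ unless $\phi^{4}=1$.

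Within $\mathbb{C}_{I_\phi}$, the solutions of $\phi^{4}=1$ are $\pm1$ and $\pm I_\phi$. The values $\pm1$ are excluded because $\phi\notin\mathbb{R}$, and $\pm I_\phi$ are excluded because $\phi_0\neq0$. Therefore $A_2=0$, so $A=A_1\in\mathbb{C}_{I_\phi}^{n\times n}$, which is the claim. (As a by-product, $A_1$ is Hermitian in $\mathbb{C}_{I_\phi}$.)

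The only delicate points are bookkeeping ones: verifying the identity $(A_2J)^{\ast}=-A_2^{T}J$ with the correct placement of $J$, and checking that the $\mathbb{C}_{I_\phi}\oplus\mathbb{C}_{I_\phi}J$ decomposition is unique so that the comparison of components is legitimate.
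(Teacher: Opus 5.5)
Your proof is correct and complete. The splitting $A=A_1+A_2J$ over $\mathbb{C}_{I_\phi}$ is unique, and the identities $\overline{\phi}A\phi=A_1+\overline{\phi}^{2}A_2J$ and $A^{\ast}=A_1^{\ast}-A_2^{T}J$ check out entrywise. The relation $\overline{\phi}^{4}A_2=A_2$ then forces $A_2=0$ exactly because $\phi\notin\{\pm1,\pm I_\phi\}$.

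The paper gives no proof of this lemma; it is quoted from \cite{YHX}. However, its remark that $\mathbb{C}_{I_\phi}^{n\times n}=\mathbb{C}_{I_{\phi^{2}}}^{n\times n}$ suggests a shorter, coordinate-free route:
\begin{itemize}
\item Apply $\ast$ to $\overline{\phi}A\phi=A^{\ast}$ to get $\overline{\phi}A^{\ast}\phi=A$, hence $\overline{\phi}^{2}A\phi^{2}=A$.
\item So every entry of $A$ commutes with $\phi^{2}=(\phi_0^{2}-r_\phi^{2})+2\phi_0r_\phi I_\phi$, which is non-real because $\phi_0r_\phi\neq0$.
\item The centralizer of a non-real quaternion $q$ is $\mathbb{C}_{I_q}$, and $I_{\phi^{2}}=\pm I_\phi$, so the entries lie in $\mathbb{C}_{I_\phi}$.
\end{itemize}
Your relation $\overline{\phi}^{4}A_2=A_2$ is precisely the $J$-component of $\overline{\phi}^{2}A\phi^{2}=A$. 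The two arguments are therefore the same computation, one in coordinates and one without.

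The commutation route avoids choosing $J$ and the uniqueness bookkeeping. Yours costs a little more but gives two things in return. It yields Lemma~\ref{wa} ($A_1=A_1^{\ast}$) at the same time. It also shows where $\phi_0\neq0$ is needed: for $\phi=\pm I_\phi$ your second equation degenerates to $A_2=A_2^{T}$, which is why pure imaginary $\phi$ is handled separately via Lemma~\ref{3}.
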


\begin{lemma}[\cite{YHX}]\label{wa}
Let $\phi\in \mathbb{S}^{\prime}$, $A\in \mathbb{Q}^{n\times n}$. If $A$ is $\phi$-Hermitian, then $A$ is a Hermitian matrix in $\mathbb{C}_{I_{\phi}}^{n\times n}$.
\end{lemma}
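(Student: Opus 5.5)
The plan is to start from the definition: $A$ is $\phi$-Hermitian exactly when $\overline{\phi}A\phi=A^{\ast}$. The first step is to rewrite this as a commutation relation. Since $|\phi|=1$, multiplying on the left by $\phi$ gives
$$A\phi=\phi A^{\ast}.$$
Taking conjugate transposes gives $\overline{\phi}A^{\ast}=A\overline{\phi}$, which is the same relation again.

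Write $\phi=\phi_{0}+r_{\phi}I_{\phi}$ with $\phi_{0}\neq0$ and $r_{\phi}>0$; this is possible because $\phi\in\mathbb{S}^{\prime}$. Substituting into $A\phi=\phi A^{\ast}$ and separating the real-scalar part from the $I_{\phi}$ part is not legitimate, since $I_{\phi}$ does not commute with $A$. So the plan is to use Lemma \ref{15}, which gives $A\in\mathbb{C}_{I_{\phi}}^{n\times n}$. Then every entry of $A$ commutes with $I_{\phi}$, hence with $\phi$, and the identity becomes
$$\phi A=\phi A^{\ast}.$$
Cancelling the invertible scalar $\phi$ yields $A=A^{\ast}$. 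Thus $A$ is a Hermitian matrix, and all of its entries lie in $\mathbb{C}_{I_{\phi}}$, which is the claim.

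Everything therefore rests on Lemma \ref{15}, which is assumed from \cite{YHX}. The genuine content, and the main obstacle, is the step showing that the entries lie in $\mathbb{C}_{I_{\phi}}$.

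If one instead wanted a self-contained argument, the approach would be entrywise. Decompose each entry $a_{st}=x+yI_{\phi}+w$, where $w$ is a pure quaternion orthogonal to $I_{\phi}$. The key facts are:
\begin{enumerate}
\item conjugation by $\phi$ acts on $w$ as a rotation by angle $2\arg\phi$ about the axis $I_{\phi}$;
\item this angle is not a multiple of $\pi$, because $\phi_{0}\neq0$ and $\phi\neq\pm1$.
\end{enumerate}
Pairing the relation for the entry $(s,t)$ with the relation for the entry $(t,s)$ then shows that the orthogonal components must vanish. This is a routine two-equation linear system in the rotation.
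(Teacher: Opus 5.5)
Your proof is correct and takes the natural route. The paper quotes this lemma from \cite{YHX} without proof, right after Lemma \ref{15}, and your deduction is the evident one: entries in $\mathbb{C}_{I_{\phi}}$ commute with $\phi$, so $\overline{A}^{\phi}=A$ and the $\phi$-Hermitian condition reduces to $A=A^{\ast}$.

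Your rotation sketch for Lemma \ref{15} is also sound, and you dismiss the conjugate-transpose step too quickly when you call it ``the same relation again.'' The conjugate transpose of $A^{\ast}=\overline{\phi}A\phi$ is $A=\overline{\phi}A^{\ast}\phi$, i.e.\ $A^{\ast}=\phi A\overline{\phi}$. Comparing the two expressions for $A^{\ast}$ gives $\phi^{2}A=A\phi^{2}$, which is exactly your pairing of the $(s,t)$ and $(t,s)$ entries. Since $\mathrm{Im}(\phi^{2})=2\phi_{0}r_{\phi}I_{\phi}\neq 0$, every entry then lies in $\mathbb{C}_{I_{\phi^{2}}}=\mathbb{C}_{I_{\phi}}$.
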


\begin{lemma}[\cite{YHX}]\label{66}
Let $\phi\in \mathbb{S}^{\prime}$, $A\in \mathbb{Q}^{n\times n}$. If $A$ is $\phi$-Hermitian, then there exists a unitary matrix $U\in \mathbb{C}_{I_{\phi}}^{n\times n}$ and a real diagonal matrix $\Delta$ such that
$$
\Delta=UA U^{\ast\phi}.
$$
\end{lemma}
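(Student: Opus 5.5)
The plan is to reduce the statement to the ordinary spectral theorem for complex Hermitian matrices, transported to the field $\mathbb{C}_{I_{\phi}}\cong\mathbb{C}$. The key observation is that the $\phi$-transpose does not change anything for matrices with entries in $\mathbb{C}_{I_{\phi}}$. So it is enough to diagonalize $A$ by an ordinary unitary similarity carried out inside $\mathbb{C}_{I_{\phi}}^{n\times n}$.

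First, Lemma \ref{wa} shows that $A$ is a Hermitian matrix lying in $\mathbb{C}_{I_{\phi}}^{n\times n}$. Consider the map $\iota:\mathbb{C}\to\mathbb{C}_{I_{\phi}}$, $a+b\boldsymbol{i}\mapsto a+bI_{\phi}$. Since $I_{\phi}^{2}=-1$, this is a field isomorphism. It is also compatible with quaternion conjugation, because $\overline{a+bI_{\phi}}=a-bI_{\phi}$ when $I_{\phi}$ is a unit pure imaginary quaternion. Applied entrywise, $\iota$ therefore commutes with $\ast$ and with matrix multiplication.

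It follows that $\iota^{-1}(A)$ is a complex Hermitian matrix. The classical spectral theorem gives a complex unitary $U_{0}$ and a real diagonal $\Delta$ with $U_{0}\,\iota^{-1}(A)\,U_{0}^{\ast}=\Delta$. Set $U=\iota(U_{0})\in\mathbb{C}_{I_{\phi}}^{n\times n}$. Then $U$ is unitary as a quaternion matrix, since $UU^{\ast}=\iota(U_{0}U_{0}^{\ast})=I_{n}$, and $UAU^{\ast}=\iota(\Delta)=\Delta$ because $\Delta$ is real.

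The remaining step, which is the only point needing care, is to show $U^{\ast\phi}=U^{\ast}$. By definition $U^{\ast\phi}=\overline{\phi}\,U^{\ast}\phi$, where $\phi$ acts as a scalar on each entry. We have $\phi=\phi_{0}+r_{\phi}I_{\phi}\in\mathbb{C}_{I_{\phi}}$, and every entry of $U^{\ast}$ also lies in the commutative field $\mathbb{C}_{I_{\phi}}$. Hence $\phi$ commutes with each entry of $U^{\ast}$, which gives $\overline{\phi}\,U^{\ast}\phi=U^{\ast}\overline{\phi}\phi=U^{\ast}|\phi|^{2}=U^{\ast}$. Therefore $UAU^{\ast\phi}=UAU^{\ast}=\Delta$, as required.

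The main obstacle is thus not this diagonalization step, which is classical, but Lemma \ref{wa} itself: it guarantees that a quaternion $\phi$-Hermitian matrix has all its entries in $\mathbb{C}_{I_{\phi}}$ (using $\phi\notin\mathbb{S}\cup\{\pm1\}$) and is genuinely Hermitian there. Since that lemma is available, the argument above completes the proof.
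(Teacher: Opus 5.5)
Your proof is correct, and it follows the paper's line of argument. The paper only cites the lemma from the preprint, but its proof of Proposition \ref{55} (the dual analogue) runs the same way: by Lemma \ref{wa} the matrix is Hermitian in $\mathbb{C}_{I_{\phi}}^{n\times n}$, it is diagonalized there by a unitary $U\in\mathbb{C}_{I_{\phi}}^{n\times n}$, and $U^{\ast\phi}=\overline{\phi}U^{\ast}\phi=U^{\ast}$ because $\phi\in\mathbb{C}_{I_{\phi}}$ commutes with the entries of $U^{\ast}$.
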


The result in Lemma \ref{66} can be viewed as a generalization of the Autonne-Takagi factorization for $\eta$-Hermitian quaternion matrices where $\eta$ is pure imaginary.

Next, we show the diagonalization of $\phi$-Hermitian dual quaternion matrices under unitary $\phi$-congruence for $\phi\in \mathbb{S}^{\prime}$.
\begin{proposition}\label{55}
Let $\phi\in \mathbb{S}^{\prime}$, $A\in \mathbb{DQ}^{n\times n}$. If $A$ is $\phi$-Hermitian, then there exists a unitary matrix $W\in \mathbb{DQ}^{n\times n}$ and a diagonal matrix $\Lambda$ with dual numbers as its diagonal entries such that
\begin{equation}\label{144}
\Lambda=WA W^{\ast\phi}=\mathrm{diag}\bigl(
r_1+R_{11}\epsilon,\ldots,r_1+R_{1m_1}\epsilon,
r_2+R_{21}\epsilon,\ldots,
r_r+R_{rm_r}\epsilon
\bigr),
\end{equation}
where
\[
r_1<\cdots<r_r,\ \ R_{i1}\leq\cdots\leq R_{im_i},\ \ r_i,R_{ij}\in\mathbb{R},\ \ \sum_{i=1}^{r}m_i=n.
\]
\end{proposition}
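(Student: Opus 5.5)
The plan is to reduce Proposition \ref{55} to the Hermitian case, Lemma \ref{16}, carried out inside the commutative subfield $\mathbb{C}_{I_{\phi}}$, and then to convert unitary equivalence into unitary $\phi$-congruence.

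\textbf{Step 1: both parts lie in $\mathbb{C}_{I_{\phi}}^{n\times n}$ and are Hermitian.} Write $A=A_{st}+A_{\mathcal{I}}\epsilon$. Since $A$ is $\phi$-Hermitian, both $A_{st}$ and $A_{\mathcal{I}}$ are $\phi$-Hermitian. By Lemmas \ref{15} and \ref{wa}, both are then Hermitian matrices in $\mathbb{C}_{I_{\phi}}^{n\times n}$. Every entry of $A_{st}$ and $A_{\mathcal{I}}$ commutes with $\phi$, because $\phi\in\mathbb{C}_{I_{\phi}}$. Hence
\[
\overline{A}^{\phi}=\overline{\phi}A\phi=A .
\]
The $\phi$-Hermitian condition $\overline{A}^{\phi}=A^{\ast}$ therefore says exactly that $A$ is a Hermitian dual matrix over $\mathbb{C}_{I_{\phi}}\cong\mathbb{C}$.

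\textbf{Step 2: diagonalize inside $\mathbb{C}_{I_{\phi}}$.} Apply Lemma \ref{16}, or more carefully its complex version, since the Qi--Luo construction uses only standard diagonalization of the Hermitian part $A_{st}$ followed by a correction $P=I+S\epsilon$ with $S$ skew-Hermitian. All entries of $S$ are quotients of entries of $U A_{\mathcal{I}} U^{\ast}$ by real differences $\lambda_s-\lambda_t$. The result is a unitary $V=V_{st}+V_{\mathcal{I}}\epsilon$ with $V_{st},V_{\mathcal{I}}\in\mathbb{C}_{I_{\phi}}^{n\times n}$ and
\[
V^{\ast}AV=\Sigma ,
\]
where $\Sigma$ has exactly the ordered form (\ref{144}). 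For this step:
\begin{itemize}
\item The unitary $U$ diagonalizing $A_{st}$ can be chosen in $\mathbb{C}_{I_{\phi}}^{n\times n}$, as in Lemma \ref{66}.
\item Each diagonal block of $UA_{\mathcal{I}}U^{\ast}$ is Hermitian over $\mathbb{C}_{I_{\phi}}$, so it is diagonalized by a unitary in $\mathbb{C}_{I_{\phi}}$ with real, sorted eigenvalues.
\end{itemize}

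\textbf{Step 3: convert to $\phi$-congruence.} Put $W=V^{\ast}$. Since every entry of $W$ lies in $\mathbb{C}_{I_{\phi}}$, it commutes with $\phi$, so
\[
W^{\ast\phi}=\overline{\phi}\,W^{\ast}\phi=W^{\ast}=V .
\]
Therefore $WAW^{\ast\phi}=V^{\ast}AV=\Lambda:=\Sigma$. The diagonal entries of $\Lambda$ are dual numbers $r_i+R_{ij}\epsilon$ with the required ordering. Here $r_1<\cdots<r_r$ only, with no sign condition, which matches the statement.

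\textbf{Main obstacle.} The only delicate point is Step 2: one must make sure that every unitary factor, including the infinitesimal correction $P$, has entries in $\mathbb{C}_{I_{\phi}}$. Otherwise $W^{\ast\phi}\neq W^{\ast}$. The check is to follow the construction of Lemma \ref{16} and verify that each operation stays inside the subring. These operations are products, conjugate transposes, and division by the nonzero reals $\lambda_s-\lambda_t$, and each preserves $\mathbb{C}_{I_{\phi}}$. If one instead quotes Lemma \ref{16} only as a black box over $\mathbb{Q}$, this membership is lost. So I would redo that short argument over $\mathbb{C}_{I_{\phi}}\cong\mathbb{C}$, which is the classical dual complex Hermitian diagonalization.
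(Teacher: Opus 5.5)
Your proposal is correct and follows essentially the same route as the paper. Both parts of $A$ lie in $\mathbb{C}_{I_{\phi}}^{n\times n}$ and are Hermitian by Lemmas \ref{15} and \ref{wa}, the Qi--Luo construction is carried out inside $\mathbb{C}_{I_{\phi}}$, and the fact that $\phi$ commutes with $\mathbb{C}_{I_{\phi}}$ turns $W^{\ast}$ into $W^{\ast\phi}$. Your explicit check that the correction factor $I+S\epsilon$ stays in $\mathbb{C}_{I_{\phi}}$, using only division by the nonzero reals $\lambda_s-\lambda_t$, simply spells out what the paper asserts by replacing $\mathbb{Q}^{n\times n}$ with $\mathbb{C}_{I_{\phi}}^{n\times n}$ throughout.
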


\begin{proof}
Recall that for a matrix $A=A_{st}+A_{\mathcal{I}}\epsilon\in\mathbb{DQ}^{n\times n}$, $A$ is $\phi$-Hermitian if and only if both $A_{st}$ and $A_{\mathcal{I}}$ are $\phi$-Hermitian. Then both $A_{st}$ and $A_{\mathcal{I}}$ are in the set $\mathbb{C}_{I_{\phi}}^{n\times n}$ by Lemma \ref{15}. Therefore,
$$
A=A_{st}+A_{\mathcal{I}}\epsilon\in \mathbb{C}_{I_{\phi}}^{n\times n}+ \mathbb{C}_{I_{\phi}}^{n\times n}\epsilon,
$$
where
$$
\mathbb{C}_{I_{\phi}}^{n\times n}+ \mathbb{C}_{I_{\phi}}^{n\times n}\epsilon\cong \mathbb{C}^{n\times n}+ \mathbb{C}^{n\times n}\epsilon=\mathbb{DC}^{n\times n}\subseteq \mathbb{DQ}^{n\times n}.
 $$
 Hence $A$ is a Hermitian matrix in $\mathbb{C}_{I_{\phi}}^{n\times n}+ \mathbb{C}_{I_{\phi}}^{n\times n}\epsilon$ by Lemma \ref{wa}. Following from the construction of the unitary matrix in the proof of Lemma \ref{16} (\cite[Theorem 4.1]{QL}), if we replace every $\mathbb{Q}^{n\times n}$ in that process by $\mathbb{C}_{I_{\phi}}^{n\times n}$, then we obtain that there exists a unitary matrix $W\in \mathbb{C}_{I_{\phi}}^{n\times n}+ \mathbb{C}_{I_{\phi}}^{n\times n}\epsilon \subseteq \mathbb{DQ}^{n\times n}$
such that
$$
WAW^{\ast}=\Lambda,
 $$
 where $\Lambda$ is a diagonal matrix and all the diagonal entries of $\Lambda$ are dual numbers. Furthermore, by $W\in \mathbb{C}_{I_{\phi}}^{n\times n}+ \mathbb{C}_{I_{\phi}}^{n\times n}\epsilon$, we have
$$
\Lambda=WAW^{\ast}=WAW^{\ast}\overline{\phi}\phi=WA\overline{\phi}W^{\ast}\phi=WAW^{\ast\phi}.
$$
\end{proof}

Together with Propositions \ref{4.1}--\ref{55}, we can obtain the generalized Autonne--Takagi factorization of $\phi$-Hermitian dual quaternion matrices for $\phi\in\mathbb{S}^{3}$ as follows.
\begin{theorem}
Let $\phi\in\mathbb{S}^{3}$, $A\in \mathbb{DQ}^{n\times n}$. If $A$ is $\phi$-Hermitian, then there exists a unitary matrix $W\in \mathbb{DQ}^{n\times n}$ and a diagonal matrix $\Lambda$ with dual numbers as its diagonal entries such that
$$
\Lambda=WA W^{\ast\phi}.
$$
\end{theorem}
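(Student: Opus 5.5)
The plan is a case split on the location of $\phi$ in $\mathbb{S}^{3}$. The three sets $\{\pm 1\}$, $\mathbb{S}$ and $\mathbb{S}^{\prime}=\mathbb{S}^{3}\setminus(\mathbb{S}\cup\{\pm1\})$ partition $\mathbb{S}^{3}$. Each case is covered by one of the propositions just proved, so the theorem should follow by collecting them.

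\emph{Case $\phi=\pm1$.} Here $\overline{A}^{\phi}=\overline{\phi}A\phi=A$, because $\phi$ is real with $\phi^{2}=1$. Consequently $A^{\ast\phi}=A^{\ast}$, and $\phi$-Hermitian means Hermitian. Proposition \ref{4.1}, which rests on Lemma \ref{16}, then gives a unitary $V$ with $VAV^{\ast\phi}$ diagonal. Its diagonal entries $\lambda_i+\lambda_{ij}\epsilon$ are dual numbers, since $\lambda_i,\lambda_{ij}\in\mathbb{R}$. Set $W=V$.

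\emph{Case $\phi\in\mathbb{S}$.} Proposition \ref{5} supplies a unitary $V$ with $VAV^{\ast\phi}=\Gamma$. The entries of $\Gamma$ are $r_i+R_{ij}\epsilon$ with $r_i,R_{ij}\in\mathbb{R}$, hence dual numbers. Set $W=V$.

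\emph{Case $\phi\in\mathbb{S}^{\prime}$.} Proposition \ref{55} directly gives a unitary $W$ with $WAW^{\ast\phi}=\Lambda$, where $\Lambda$ is diagonal with dual-number entries.

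The only point needing care is that the three cases really exhaust $\mathbb{S}^{3}$. This holds by the definition of $\mathbb{S}^{\prime}$. In each case the unitary matrix produced lies in $\mathbb{DQ}^{n\times n}$ and is unitary in the dual-quaternion sense. There is no genuine obstacle: the substantive work was done in Theorem \ref{4} and in Propositions \ref{4.1}, \ref{5} and \ref{55}, and the present statement only assembles them.
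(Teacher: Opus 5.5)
Your proposal is correct and follows the paper's own argument. You partition $\mathbb{S}^{3}$ into $\{\pm1\}$, $\mathbb{S}$ and $\mathbb{S}^{\prime}$, then invoke Propositions \ref{4.1}, \ref{5} and \ref{55} respectively, exactly as the paper assembles the theorem.
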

\begin{remark}
One should note that regardless of whether the real part of $\phi$ in $\mathbb{S}^{3}$ is zero, the $\phi$-Hermitian dual quaternion matrix $A$ can always be diagonalized to a dual real matrix $\Lambda$ in the sense of unitary $\phi$-congruence. However, when $\phi\in\mathbb{S}$ , the diagonal entries can be nonnegative, whereas when $\phi\in\mathbb{S}^{3}\setminus \mathbb{S}$, they can only be made real.
\end{remark}

\end{document}